\documentclass[final,3p,times,nonatbib]{elsarticle}
\usepackage{graphicx}
\usepackage{amsmath}
\usepackage{amsthm}
\usepackage{amsfonts}
\usepackage{physics}
\usepackage{multirow}
\usepackage{bm}
\usepackage{empheq}
\usepackage{mathtools}
\mathtoolsset{showonlyrefs,showmanualtags}

\newtheorem{thm}{Theorem}
\newtheorem{dfn}[thm]{Definition}
\newtheorem{prop}[thm]{Proposition}
\newtheorem{lem}[thm]{Lemma}

\newcommand{\dsv}[2]{{{#1}^{\rm #2}}}
\newcommand{\dsh}[2]{{{#1}_{\rm #2}}}
\newcommand{\ctext}[1]{\raise0.2ex\hbox{\textcircled{\scriptsize{#1}}}}
\newcommand{\gbi}{{G_{\rm bi}}}
\newcommand{\gful}{{G_{\rm full}}}
\newcommand{\gred}{{G_{\rm red}}}

\title{Structural Inconsistency and Stability Classification of Multi-symplectic Diamond Schemes}
\author[1]{Kaito Sato\corref{cor1}}
\ead{ksato-27@g.ecc.u-tokyo.ac.jp}
\author[2]{Shun Sato}
\ead{shun@mist.i.u-tokyo.ac.jp}
\author[1]{Takayasu Matsuo}
\ead{matsuo@mist.i.u-tokyo.ac.jp}
\cortext[cor1]{Corresponding author}
\affiliation[1]{organization={Department of Mathematical Informatics, Graduate School of Information Science and Technology, The University of Tokyo},
postcode={113-8656},
city={Tokyo},
country={Japan}}
\affiliation[2]{organization={Department of Mathematical Sciences, Graduate School of Science, Tokyo Metropolitan University},
postcode={192-0397},
city={Tokyo},
country={Japan}}

\date{August 2025}

\begin{document}

\begin{abstract}
Multi-symplectic diamond schemes proposed by McLachlan and Wilkins (2015) provide a framework for the numerical integration of Hamiltonian partial differential equations, combining local implicitness with high-order accuracy and discrete multi-symplectic conservation laws. Despite these advantages, their behavior beyond a limited class of model equations remains poorly understood, and numerical difficulties may arise depending on the underlying multi-symplectic formulation.
In this paper, we present a systematic stability analysis framework for diamond schemes applied to general multi-symplectic PDEs. The approach consists of three stages. First, we identify structural inconsistency of the local diamond update using Dulmage--Mendelsohn decomposition, revealing cases in which the scheme is intrinsically unsolvable. Second, we introduce a graph-based error-propagation analysis that yields a necessary stability condition by detecting negative cycles in a weighted directed graph. Third, for equations that pass the preliminary tests, we derive eigenvalue-based timestep restrictions providing sufficient conditions for stability.
The analysis leads to a comprehensive classification of multi-symplectic PDEs according to whether diamond schemes are structurally inconsistent, unconditionally unstable, or conditionally stable. In particular, we show that benchmark equations such as the Korteweg--de Vries equation are intrinsically incompatible with the diamond update, while systems including the nonlinear Dirac and ``good'' Boussinesq equations admit stability regimes under mild timestep scaling. Extensive numerical experiments confirm the theoretical predictions and demonstrate the practical implications of the proposed framework.
Our results clarify fundamental limitations of diamond schemes and provide practical guidelines for their reliable application to new PDE models.

\end{abstract}
\begin{keyword}
    Geometric numerical integrator \sep multi-symplectic integrator \sep multi-symplectic diamond schemes \sep numerical stability
\end{keyword}

\maketitle

\section{Introduction}
In this paper, we consider the numerical computations of the multi-symplectic partial differential equations (PDEs) of the form:
\begin{equation}\label{eq:multi-symplectic}
    K z_t + L z_x = \nabla S(z),
\end{equation}
where $z = z(x, t)$ is a vector of dimension $d$, $K$ and $L$ are $d \times d$ skew-symmetric matrices, and $S(z)$ is a scalar function~\cite{bridges1997multi,BRIDGES_1997}.
Many physical systems, particularly integrable systems, can be written as multi-symplectic PDEs. For example, the Klein--Gordon equation $u_{tt} - u_{xx} = f(x)$ can be written in the form of Eq.~\eqref{eq:multi-symplectic} by introducing auxiliary variables $v = u_t$, $w = u_x$ and setting
\begin{align} \label{eq:LKG}
K = \begin{pmatrix}
    0 & -1 & 0 \\
    1 & 0 & 0 \\
    0 & 0 & 0
\end{pmatrix}, \quad
L = \begin{pmatrix}
    0 & 0 & 1 \\
    0 & 0 & 0 \\
    -1 & 0 & 0
\end{pmatrix}, \quad
S(z) = V(u) + (v^2 - w^2) / 2,
\end{align}
where $V(u)$ satisfies $V'(u) = -f(u)$. Other equations, such as the Korteweg--de Vries equation, the nonlinear Schr\"{o}dinger equation, the nonlinear Dirac equation, and the Boussinesq equation, can also be written in the form of Eq.~\eqref{eq:multi-symplectic}.
A strong feature of the multi-symplectic PDEs is that they satisfy the local conservation law of symplecticness:
\begin{equation}\label{eq:local_conservation}
    \pdv{t} \left( \frac{1}{2} \dd{z} \wedge K \dd{z}\right) + \pdv{x} \left( \frac{1}{2} \dd{z} \wedge L \dd{z}\right) = 0.
\end{equation}
From this, the global conservation laws of energy and momentum follow under appropriate boundary conditions.
In this sense, the local law provides more detailed information about the geometric structure of the PDE and is thus expected to be better respected in numerical computations rather than simply preserving global first integrals.

Multi-symplectic numerical integrators introduced in the seminal paper by Bridges and Reich~\cite{BR01} preserve the geometric structure in a discrete setting.
There, an implicit midpoint rule type integrator in both time and space directions was introduced, which is now called the Preissmann box scheme.
Later, Moore and Reich~\cite{moore2003backward} introduced an explicit symplectic Euler-type scheme, the so-called Euler box scheme.
These two numerical methods involve a trade-off between their respective advantages and disadvantages, as do the standard explicit Euler method (explicit but potentially unstable) and the implicit midpoint rule (potentially stable but fully-implicit), and should be chosen on a case-by-case basis depending on the PDE to be solved.
A partial list of the studies on specific PDEs is:  the KdV equation~\cite{AM05,WW13, ZQ00}, the nonlinear Schr\"{o}dinger equation~\cite{ZYY13,CX13,SQ03,HLMA06,CZT02}, the nonlinear Dirac equation~\cite{HL06}, and the Boussinesq equation~\cite{Chen05}.

In 2015, McLachlan and Wilkins proposed an intriguing new class of multi-symplectic integrators, the diamond schemes~\cite{Diamond}.
They utilize unconventional diamond-shaped mesh in space and time, and as a consequence, the schemes are {\em implicit only locally}; although the schemes are implicit, the nonlinear equations to be solved are separated in each diamond (i.e., a cell), and accordingly, the required computational complexity is $O(N)$ with $N$ the number of meshes in spatial direction (the computations could be even parallelized).
If not as fast as the explicit Euler box scheme, the diamond schemes should be significantly faster than the fully-implicit Preissmann box scheme.
Nevertheless, the diamond scheme retains a discrete version of the local symplectic law~\eqref{eq:local_conservation}, and thus it is expected to be an interesting third option that falls somewhere between the Euler box scheme and the Preissmann box scheme.
In addition to the basic diamond scheme, McLachlan and Wilkins also proposed higher-order versions based on the Runge--Kutta method.
They also showed that the diamond schemes work remarkably well for the nonlinear Klein--Gordon equation.
This study was followed by Marsland et al.~\cite{MMW20}, which analyzed aspects of the schemes, including parallelization and boundary treatments.
There, the nonlinear Klein--Gordon equation was considered as a benchmark again.

Despite the appealing properties and success with the nonlinear Klein--Gordon equation, the behavior of diamond schemes for other multi-symplectic PDEs has remained largely unexplored.
One reason might be that although previous studies implicitly assumed that solvability and stability behave similarly to other multi-symplectic schemes, which turns out not to be the case.
This motivates us to theoretically investigate the stability of the diamond schemes, but because of the spatiotemporal mesh's special structure, standard stability analyses for numerical differential equations do not apply.
The standard Dahlquist test is inadequate, since multi-symplectic PDEs are essentially systems (recall that the test is not meaningful for symplectic integrators, which assume a Hamiltonian system).
The von Neumann test cannot be used, since the diamond-shaped mesh is obliquely aligned and lacks the translational invariance the test assumes.

In view of this, the aim of this paper is to propose a complete procedure by which we can identify the stability (and consistency) of the diamond scheme applied to general multi-symplectic PDEs.
Through some illustrative examples, we show that there are three cases: (i) the diamond scheme is ``unsolvable'' (whose precise meaning will be defined later), (ii) the scheme is unconditionally unstable, and (iii) conditionally stable.
In the former two cases, we give up using the diamond scheme.
The third case is the happy case, if the condition for stability is not too harsh.
The proposed procedure consists of Steps 1--3, corresponding to the three cases above; it filters out the cases (i) and (ii) in this order, and finally gives a stability condition such that the diamond scheme works for the target PDE.
Fig.~\ref{fig:flowchart_stability_analysis} shows the flowchart of the procedure.

The overall stability analysis process differs significantly from that for standard numerical integrators; accordingly, it is helpful to briefly describe the essence of the three steps here.
In Steps 1 and 2, we consider the simple diamond scheme, occasionally mentioning extensions to higher-order versions when appropriate.
This is because it is unlikely that higher-order versions work better when the simple diamond scheme fails.
\begin{itemize}
    \item {\bf Step 1: Detecting structural inconsistency.}
     We first detect any structural inconsistency of the diamond scheme for the target PDE. This is done utilizing a tool from system analysis, the Dulmage--Mendelsohn decomposition of bipartite graphs, expressing the equation-unknown correspondence. We show that if the scheme is structurally inconsistent, it is hard to properly define a solution, and, in this sense, the scheme is ``unsolvable.''
     Surprisingly, the Korteweg--de Vries equation, a benchmark PDE for geometric numerical integrators, yields a structurally inconsistent multi-symplectic form, and the diamond scheme cannot be used.
    \item {\bf Step 2: Estimating the stability condition.}
    If the target PDE passes the first test, we proceed to the second test, where we estimate the stability condition. This test consists of constructing a simple weighted graph describing the error propagation and extracting cycles in the graph. The cycle weights provide a necessary condition for stability.
    It turns out in some PDEs the diamond scheme is ``unconditionally unstable,'' i.e., unstable for the stability constraint $\Delta t = O(\Delta x^s)$ for any positive $s$ ($\Delta t$ and $\Delta x$ are the time and spatial mesh sizes.
    They will be more precisely defined later.)
\end{itemize}
Although rigorous justification of these steps requires tools from system analysis and graph theory, we also provide  at the end of Section~\ref{sec:step2} a shortcut method to determine the conclusion without delving into the mathematical details.
This shortcut can be invoked in a pen-and-paper style only from the concrete form of the target multi-symplectic PDE.
We believe this is enough for most readers who hope to apply the diamond scheme to a new PDE.

Step 3 is optional, used only when we need the precise stability condition.
\begin{itemize}
    \item {\bf Step 3: Identifying precise stability condition by eigenvalue analysis.} The stability condition derived in Step 2 is a necessary condition, and generally not sufficient. If a sufficient condition is required, we perform an eigenvalue analysis in Step 3. This step requires complex matrix construction and the numerical computation of eigenvalues of small matrices.
\end{itemize}

Table~\ref{tb:stabilities_for_MS_PDEs} summarizes the result of the stability analysis of the diamond schemes for various multi-symplectic PDEs.
We have newly found that the diamond scheme for the Dirac equation is conditionally stable under the mild stability constraint $\Delta t = O(\Delta x)$.
We also found that for the ``good'' Boussinesq equation and the nonlinear Schr\"odinger equation, the diamond scheme is stable under $\Delta t = O(\Delta x^3)$.

The rest of this paper is organized as follows.
Section~\ref{sec:diamond} briefly summarizes the simple diamond scheme and the higher-order versions.
Sections~\ref{sec:step1}--\ref{sec:eigen} explain the procedure (Step 1--3).
In Section~\ref{sec:numerical}, we show some numerical examples for the PDEs newly found to be stable (with the diamond scheme).
Section~\ref{sec:conclusion} is for the conclusions and further comments.

\begin{figure}
    \begin{center}
        \includegraphics[width=125mm]{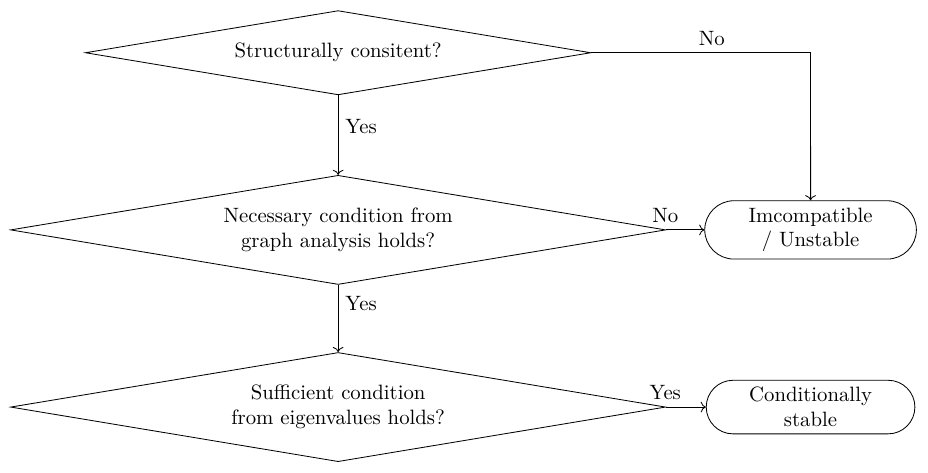}
    \end{center}
    \caption{Flowchart of the stability analysis process for the diamond schemes}
    \label{fig:flowchart_stability_analysis}
\end{figure}

\begin{table}[htbp]
    \centering
    \caption{Stabilities of the diamond schemes for multi-symplectic PDEs}
    \begin{tabular}{c|c|l}
        \hline
        Consistency/Stability & Situation & \multicolumn{1}{c}{PDEs} \\ \hline
        Structurally & \multirow{2}{*}{Over/underdetermined} & Advection, KdV, Camassa-Holm,\\
        inconsistent &  & BBM, Hunter--Saxton \\ \cline{1-3}
        \multirow{2}{*}{Unstable} & \multirow{2}{*}{Unconditionally unstable} & $u_{tx} = f(u)$, Ostrovsky,\\
        & & Improved Boussinesq\\ \hline

        \multirow{4}{*}{Conditionally stable}
        & \multirow{2}{*}{Stable with $\Delta t = O(\Delta x)$} & $u_{tt} - u_{xx} = f(u)$,\\
        &  & Dirac\\ \cline{2-3}
        & \multirow{2}{*}{Stable with $\Delta t = O(\Delta x^3)$} & ``Good'' Boussinesq,\\
        & & Nonlinear Schr\"{o}dinger\\ \hline
    \end{tabular}
    \label{tb:stabilities_for_MS_PDEs}
\end{table}

\section{A brief introduction of diamond schemes} \label{sec:diamond}
\subsection{The simple diamond scheme}
In what follows, we consider the multi-symplectic PDE~\eqref{eq:multi-symplectic} on a spatial domain $[a, b]$ and a temporal domain $[0, T]$ with periodic boundary conditions $z(a, \cdot) = z(b, \cdot)$ and initial conditions $z(x, 0) = z_0(x)$. We begin by briefly reviewing the construction of the simple diamond scheme. In the scheme, the spatial domain $[a, b]$ is divided into $N$ subintervals of length $\Delta x = (b-a) / N$ and the temporal domain $[0, T]$ is divided into $N_t$ subintervals of length $\Delta t = T / N_t$. For the simple diamond scheme, a diamond-shaped mesh is constructed in space and in time, as shown in Fig.~\ref{fig:diamond_mesh}. For the cases where we need to distinguish each diamond and its vertices, let spatial and time indices to be $n\in \{0, 1/2, 1, 3/2, \ldots, N\}$ and $i\in \{0, 1/2, 1, 3/2, \ldots\}$, and consider a collection of the points $(n \Delta x, i\Delta t)$'s.
We say ``vertex $(n,i)$'' when it is at $(n\Delta x, i\Delta t)$, and
also ``diamond $(n,i)$'' when its center is at $(n\Delta x, i\Delta t)$.

\subsubsection{Updating a single diamond}
For every single diamond (Fig.~\ref{fig:single_diamond}), a numerical solution of $z$ on the top corner is updated using known numerical solutions on the left, right, and bottom corners, by solving the following nonlinear equation:
\begin{equation}\label{eq:diamond_update}
    K \left(\frac{\boxed{z^\mathrm{t}} - z^\mathrm{b}}{\Delta t}\right) + L \left(\frac{z_\mathrm{r} - z_\mathrm{l}}{\Delta x}\right) = \nabla S(\boxed{z_0}), \qquad
    \text{where } z_0 := \frac{\boxed{\dsv{z}{t}} + \dsv{z}{b} + \dsh{z}{l} + \dsh{z}{r}}{4},
\end{equation}
where $z^\mathrm{t}, z^\mathrm{b}, z_\mathrm{l}, z_\mathrm{r}$ represent numerical solutions of $z$ at the top, bottom, left, and right corners of the diamond, respectively\footnote{Note that $z_t$ in the continuous PDE~\eqref{eq:multi-symplectic} and $\dsv{z}{t}$ in the discrete diamond scheme~\eqref{eq:diamond_update} are different.}.
The notation \boxed{\rm box} has no mathematical meaning; it is just for presentation and shows that the variable includes the unknown $\dsv{z}{t}$.
This is useful to identify the order of calculation in the subsequent analysis. We use this notation throughout this paper.

\begin{figure}[htbp]
    \begin{minipage}[t]{.48\linewidth}
        \begin{center}
            \includegraphics{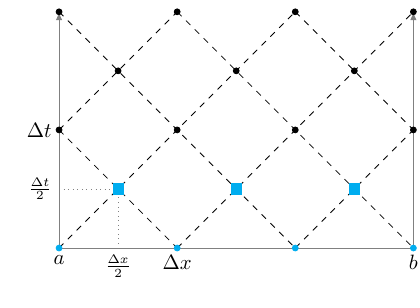}
        \end{center}
        \caption{Diamond-shaped mesh}
        \label{fig:diamond_mesh}
    \end{minipage}
    \begin{minipage}[t]{.48\linewidth}
        \begin{center}
            \includegraphics{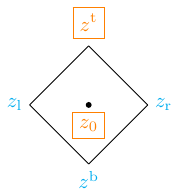}
        \end{center}
        \caption{A single diamond}
        \label{fig:single_diamond}
    \end{minipage}
\end{figure}

\subsubsection{Overall updating procedure}
The overall updating procedure of the simple diamond scheme is as follows: At first, the initial values of $z_{i, 0}$ are computed by the initial conditions. The first half-integer time solutions (the cyan square points in Fig.~\ref{fig:diamond_mesh}) are computed using some typical schemes such as the forward Euler method or the Preissmann box scheme. At each time step, numerical solutions for each diamond are computed by solving Eq.~\eqref{eq:diamond_update} independently. This updating procedure is repeated until the final time $T$ (i.e., $i = N_t$). The implicitness of the scheme is localized within each diamond. This local implicitness is an extremely significant advantage of the diamond scheme.

\subsection{The diamond scheme: high-order schemes using Runge--Kutta collocation methods}
In this subsection, the schemes referred to as \textit{the diamond schemes}, in the original paper written by McLachlan and Wilkins~\cite{Diamond}, are introduced. The diamond schemes are high-order extensions of the simple diamond scheme by combining it with the multi-symplectic Runge--Kutta collocation methods proposed by Reich~\cite{Reich}. In what follows, let $r$ denote the number of stages of the Runge--Kutta methods, and $A, b, c$ denote the coefficients of the Runge--Kutta methods. The elements of a matrix are denoted by $a_{ij}$, and those of a vector by $b_i$.

To apply the collocation methods to each diamond, first, the diamond has to be transformed into ordinary coordinate space. By the linear transform
\[
R:\quad \tilde{x} = \frac{1}{\Delta x} x + \frac{1}{\Delta t} t, \quad \tilde{t} = -\frac{1}{\Delta x} x + \frac{1}{\Delta t} t,
\]
where $(x, t)$ denotes the original coordinate in the diamond-shaped mesh, a unit square is obtained.
Then Eq.~\eqref{eq:multi-symplectic} becomes
\[
\widetilde{K} \tilde{z}_{\tilde{t}} + \widetilde{L} \tilde{z}_{\tilde{x}} = \nabla S(\tilde{z}),
\]
where
\[
\widetilde{K} = \frac{1}{\Delta t} K - \frac{1}{\Delta x} L, \qquad \widetilde{L} = \frac{1}{\Delta t} K + \frac{1}{\Delta x} L,
\]
and $\tilde{z} (\tilde{t}, \tilde{x}) = z(t, x)$.

The update in the transformed space is as follows.
A transformed diamond consists of $r \times r$ internal stage points $Z_i^j$ determined by the Runge--Kutta coefficients, as shown in Fig.~\ref{fig:RK_single_diamond}. Corresponding to the internal stage points $Z_i^j$, we prepare space and time variables $X_i^j$ and $T_i^j$, which are the approximations of $z_x$ and $z_t$, respectively.
\begin{figure}[htbp]
    \begin{center}
        \includegraphics{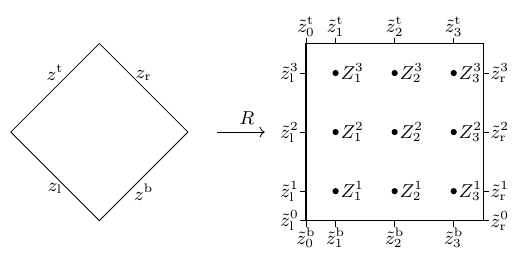}
    \end{center}
    \caption{A transformed diamond for the (higher-order) diamond schemes using Runge--Kutta collocation methods}
    \label{fig:RK_single_diamond}
\end{figure}

The Runge--Kutta discretization of this transformed diamond is as follows.
\begin{equation}\label{eq:RK_update}
    Z_i^j = \tilde{z}_\mathrm{l}^j + \sum_{k = 1}^r a_{ik} X_k^j,
    \qquad
    Z_i^j = \tilde{z}^\mathrm{b}_i + \sum_{k = 1}^r a_{jk} T_i^k,
    \qquad
    \nabla S(Z_i^j) = \widetilde{K} T_i^j + \widetilde{L} X_i^j.
\end{equation}
After solving these equations, we obtain the new numerical solutions over the upper sides of the original diamond, which are $z^\mathrm{t}$ and $z_\mathrm{r}$ on the left side of Fig.~\ref{fig:RK_single_diamond}, as
\begin{equation}\label{eq:RK_output}
    \tilde{z}_\mathrm{r}^i = \tilde{z}_\mathrm{l}^i + \sum_{k = 1}^r b_k X_k^i, \qquad
    \tilde{z}^\mathrm{t}_i = \tilde{z}^\mathrm{b}_i + \sum_{k = 1}^r b_k T_i^k.
\end{equation}
In the diamond scheme, this process is repeated for each diamond and for each time step to obtain numerical solutions over the entire diamond mesh.

\subsection{Properties of diamond schemes}
The simple diamond scheme preserves the multi-symplectic conservation laws of the original PDE~\eqref{eq:multi-symplectic}. More precisely, the scheme reproduces the local conservation law Eq.~\eqref{eq:local_conservation} in the discrete form as
\begin{equation*}\label{eq:local_conservation_discrete}
    \begin{split}
        &\phantom{{}+{}} \frac{1}{4\Delta t} ((\dd{z_\mathrm{l}} + \dd{z^\mathrm{t}} + \dd{z_\mathrm{r}}) \wedge K \dd{z^\mathrm{t}} - (\dd{z_\mathrm{l}} + \dd{z^\mathrm{b}} + \dd{z_\mathrm{r}}) \wedge K \dd{z^\mathrm{b}}) \\
        &+ \frac{1}{4\Delta x} ((\dd{z^\mathrm{t}} + \dd{z_\mathrm{r}} + \dd{z^\mathrm{b}}) \wedge L \dd{z_\mathrm{r}} - (\dd{z^\mathrm{t}} + \dd{z_\mathrm{l}} + \dd{z^\mathrm{b}}) \wedge L \dd{z_\mathrm{l}} ) = 0.
    \end{split}
\end{equation*}

The simple diamond scheme has a quadratic order of accuracy in time and space for the simple Klein--Gordon equation~\cite{Diamond}. In addition, the (high order) diamond schemes with Runge--Kutta collocation methods seem to have $r$-th order when $r$ is even and $(r+1)$-th order when $r$ is odd~\cite{Diamond}. Here, we stress that accuracy and stability are different properties of numerical schemes, and so there should be different treatments for the stability of the diamond schemes. In this paper, we discuss the stability of the diamond schemes in detail.

\section{Step 1: Detecting structural inconsistency of diamond schemes}
\label{sec:step1}

The first step of our analysis is to decide whether a diamond scheme admits a \emph{structurally consistent} local update. If not, the scheme may not be in general uniquely and locally solved.
In many traditional time-marching schemes, solvability issues rarely arise: even for nonlinear schemes, one typically obtains a (locally) unique solution at each time step for sufficiently small $\Delta t$.
In contrast, the diamond scheme may fail to provide a proper way to update the unknowns appearing in $\dsv{z}{t}$.
This failure is often structural, in the sense that the inconsistency arises from the algebraic structure of the diamond scheme rather than the actual coefficient values.
We show that such an inconsistency can be efficiently detected by the standard technique in system analysis, the Dulmage--Mendelsohn decomposition of a bipartite graph representing the relation between equations and unknowns.

\subsection{Motivating examples}

\begingroup
\renewcommand{\theequation}{A\arabic{equation}}
\setcounter{equation}{0}
Before presenting the rigorous system analysis approach, let us observe some motivating examples.
Let us first consider the linear advection equation
\[
u_t + u_x = 0.
\]
It can be written in multi-symplectic form by setting $z=(\phi,u,w)$ (cf.~\cite{Leimkuhler}):
\begin{align}
\boxed{u_t} + w_x &= 0, \label{eq:AE1}\\
-\boxed{\phi_t} &= 2\boxed{u}-\boxed{w}, \label{eq:AE2}\\
-\phi_x &= -\boxed{u}. \label{eq:AE3}
\end{align}
As described, the notation \boxed{\rm box} means that the terms include some unknown variables in $\dsv{z}{t}$ after the diamond scheme~\eqref{eq:diamond_update} is applied.
The equations~\eqref{eq:AE1} and \eqref{eq:AE3} have boxed terms only in terms of $u$, which means that both update the same variable $\dsv{u}{t}$.
Unless the two (discrete) formulae coincide, the local system is overdetermined.
\endgroup

\begingroup
\renewcommand{\theequation}{HS\arabic{equation}}
\setcounter{equation}{0}
Next, let us consider the Hunter--Saxton equation:
\[
u_{xxt} + 2u_xu_{xx} + uu_{xxx} = 0,
\]
which admits several multi-symplectic formulations~\cite{MCFM17}.
Setting $z=(u,\phi,w,v,\eta)$, we find the form:
\begin{align}
-\frac12\boxed{\eta_t} - v_x &= -\boxed{w} - \frac12\boxed{\eta}^2, \label{eq:HS1_1_new}\\
w_x &= 0, \label{eq:HS1_2_new}\\
-\phi_x &= -\boxed{u}, \label{eq:HS1_3_new}\\
u_x &= \boxed{\eta}, \label{eq:HS1_4_new}\\
\frac12\boxed{u_t} &= -\boxed{u\eta} + \boxed{v}. \label{eq:HS1_5_new}
\end{align}
In this formulation, the variable $\phi$ never appears in the boxed terms,
and accordingly, there is no update formula for $\dsv{\phi}{t}$.
In this case, the system is underdetermined.
\endgroup

In both cases, the multi-symplectic forms exhibit structural inconsistencies, rendering the simple diamond scheme unsuitable.
In the following subsection, we will show that such structural inconsistency can be detected using a standard tool from structural system theory: the Dulmage--Mendelsohn decomposition of bipartite graphs.

\subsection{Equation--unknown bipartite graph and DM decomposition}

Let us consider the multi-symplectic PDE $K z_t + L z_x = \nabla S(z)$, and the local system of the simple diamond scheme~\eqref{eq:diamond_update} on a single diamond.
Let $\mathcal{E}$ be the set of scalar equations in this local system,
and $\mathcal{U}$ be the set of scalar unknowns to be determined on that diamond.
We define the bipartite graph:
\[
\gbi = (\mathcal{E} \sqcup \mathcal{U}, E)
\]
by connecting $e \in\mathcal{E}$ and $u \in\mathcal{U}$ when the unknown $u$ appears in the equation $e$.
The symbol $\sqcup$ is the disjoint union, and $E$ is the collection of the edges.

The Dulmage--Mendelsohn (DM) decomposition of $\gbi$ partitions the vertex sets into blocks (see, e.g., Murota~\cite{murota2010matrices}):
\[
(\mathcal{E}_0,\mathcal{U}_0),\ (\mathcal{E}_1,\mathcal{U}_1),\ \dots,\ (\mathcal{E}_h,\mathcal{U}_h),\ (\mathcal{E}_\infty,\mathcal{U}_\infty),
\]
where $h$ is an integer (determined as the output of the DM decomposition algorithm).
The decomposition reveals the following structural information on the system.

\begin{itemize}
\item The blocks $(\mathcal{E}_k,\mathcal{U}_k)$ ($k=1,\dots,h$) are \emph{well-determined} blocks in the sense that they admit perfect matching between equations and unknowns within each block.
Moreover, the decomposition induces a partial order among these blocks, providing a natural ``solve-from-upstream'' computational order at the structural level.
\item The block $(\mathcal{E}_0, \mathcal{U}_0)$ corresponds to an \emph{overdetermined} part: there are equations that cannot be matched to unknowns.
\item The block $(\mathcal{E}_\infty, \mathcal{U}_\infty)$ corresponds to an \emph{underdetermined} part: there are unknowns that cannot be matched to equations.
\end{itemize}

Using this decomposition, we formally define structural inconsistency as follows.
Unlike standard time-marching schemes, the diamond update may fail even before stability becomes an issue.
\begin{dfn}[Structural inconsistency]
\label{def:struct_inconsistency}
We say that the (simple) diamond scheme is \emph{structurally inconsistent} on the target PDE if the DM decomposition of $\gbi$ has non-zero blocks $(\mathcal{E}_0, \mathcal{U}_0)$ or $(\mathcal{E}_\infty, \mathcal{U}_\infty)$.
\end{dfn}

Figs.~\ref{fig:advection_DM} and~\ref{fig:HS1_DM} show the decomposed graphs for the previous two examples.
For the advection equation (Fig.~\ref{fig:advection_DM}), there is no well-determined block, and two equations (and accordingly their corresponding discrete formulae) are categorized into the overdetermined block $\mathcal{E}_0, \mathcal{U}_0$.
The last equation (formula) is categorized into the underdetermined block $\mathcal{E}_\infty, \mathcal{U}_\infty$, which is a consequence of the fact that the former two equations are overdetermined.
For the Hunter--Saxton case (Fig.~\ref{fig:HS1_DM}), one equation belongs to the overdetermined block, and one to the underdetermined block.
Note that the reason of \eqref{eq:HS1_2_new} $w_x=0$ being categorized as overdetermined is that it results in the discrete formula $(\dsh{w}{r}-\dsh{w}{l})/\Delta x = 0$ on the diamond, which casts an additional constraint on $\dsh{w}{r}$ and $\dsh{w}{l}$, which should have been already calculated independently in the lower left and lower right diamonds.
In these two examples, the above information agrees with the observations we had by sight.

\begin{figure}[htbp]
\centering

\includegraphics{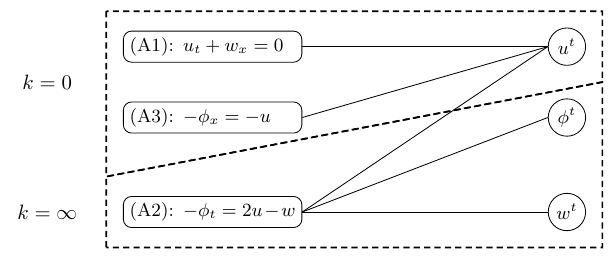}
\caption{Dulmage--Mendelsohn decomposition of the bipartite graph for the advection equation: equations vs. unknowns \((\phi^{t},u^{t},w^{t})\).}\label{fig:advection_DM}
\end{figure}

\begin{figure}[htbp]
\centering
\includegraphics{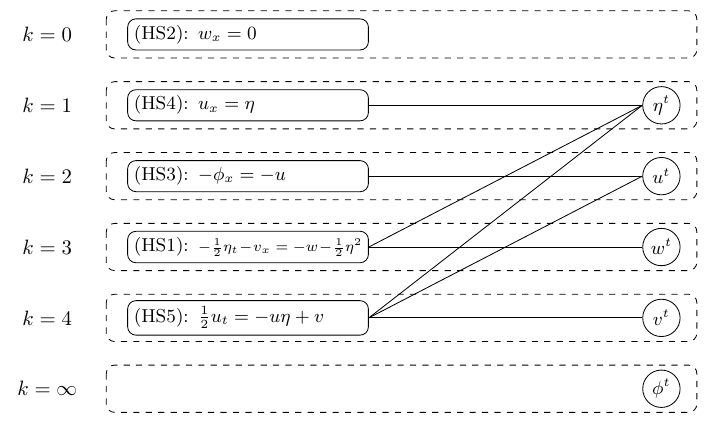}
\caption{Dulmage--Mendelsohn decomposition of the bipartite graph for the Hunter--Saxton equation (first multi-symplectic form): equations vs. unknowns \((u^{t},\phi^{t},w^{t},v^{t},\eta^{t})\). }\label{fig:HS1_DM}
\end{figure}

The next proposition shows that when the system is structurally inconsistent, the simple diamond scheme is unsolvable.
We show this for linear PDEs.
Let us consider a linear multi-symplectic PDE where $\nabla S(z)=Pz$ with a constant matrix $P$.
Then the simple diamond scheme~\eqref{eq:diamond_update} reads
\[
K \frac{\dsv{z}{t} - \dsv{z}{b}}{\Delta t} + L \frac{\dsh{z}{r} - \dsh{z}{l}}{\Delta x} = P \left(\frac{\dsv{z}{t} + \dsv{z}{b} + \dsh{z}{r} + \dsh{z}{l}}{4}\right)\quad
\Leftrightarrow \quad
\left( \frac{K}{\Delta t} - \frac{P}{4}\right)\dsv{z}{t} = \frac{K}{\Delta t}\dsv{z}{b} + \frac{L}{\Delta x}(\dsh{z}{r} - \dsh{z}{l}) + P \left(\frac{\dsv{z}{b} + \dsh{z}{r} + \dsh{z}{l}}{4}\right).
\]

\begin{prop}[Structural inconsistency implies singular coefficient matrix]
\label{prop:DM_implies_singular}
Consider a linear multi-symplectic PDE, and suppose the simple diamond scheme is structurally inconsistent on it.
Then the coefficient matrix $(K/\Delta t-P/4)$ is singular.
\end{prop}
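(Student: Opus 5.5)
The plan is to identify the bipartite graph $\gbi$ with the sparsity pattern of the coefficient matrix $M := K/\Delta t - P/4$ and then invoke the standard relation between the DM decomposition and the term rank. Since the scheme is linear, the local system on a diamond is $M\dsv{z}{t} = (\text{known data})$. The $j$th unknown $\dsv{z}{t}_j$ appears in the $i$th scalar equation exactly when $M_{ij}\neq 0$. So $\gbi$ is precisely the bipartite graph associated with the nonzero pattern of $M$: rows correspond to $\mathcal{E}$, columns to $\mathcal{U}$, and $|\mathcal{E}|=|\mathcal{U}|=d$. This reading holds generically in the entries. If some entry of $M$ vanishes through cancellation between $K/\Delta t$ and $P/4$, that happens only for exceptional $\Delta t$. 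I would simply define the edges by $M_{ij}\neq0$, which is how the examples (e.g.\ \eqref{eq:HS1_2_new}) are read.

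First, I will use the characterization recalled from Murota~\cite{murota2010matrices}. For a square bipartite graph, the blocks $(\mathcal{E}_0,\mathcal{U}_0)$ and $(\mathcal{E}_\infty,\mathcal{U}_\infty)$ are both empty if and only if $\gbi$ has a perfect matching. The justification is as follows. $\mathcal{E}_0$ is defined as the set of vertices reachable by alternating paths from unmatched equations of a maximum matching, and $\mathcal{U}_\infty$ likewise from unmatched unknowns. When the graph is square, both sets are empty exactly when no vertex is left unmatched. Hence structural inconsistency means that the maximum matching size, i.e.\ the term rank of $M$, is strictly less than $d$.

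Second, I will pass from term rank to singularity. I will expand $\det M=\sum_{\sigma}\operatorname{sgn}\sigma\prod_i M_{i\sigma(i)}$. Each permutation $\sigma$ whose term is nonzero gives a perfect matching $\{(i,\sigma(i))\}$ of $\gbi$. If no perfect matching exists, every term vanishes, so $\det M=0$ and $M$ is singular. Equivalently, by K\"onig's theorem there is a cover of all nonzero entries by $r<d$ rows and columns, which exhibits rank at most $r$. Either version finishes the argument.

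The main obstacle is conceptual rather than computational: making precise that the edges of $\gbi$ agree with the nonzero entries of $M$. In the examples, the diagram of $\gbi$ also records equations that contain \emph{no} unknown at all, such as $w_x=0$, and classifies them as overdetermined. In matrix terms, such an equation is simply a zero row of $M$, which the term-rank argument covers automatically. I would make this correspondence explicit and note that an edge of $\gbi$ whose coefficient happens to cancel numerically can only lower the true rank. The conclusion "singular" therefore holds whenever the structural rank is deficient.
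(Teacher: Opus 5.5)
Your proposal is correct and follows the same route as the paper. The paper also combines (i) the DM characterization that nonempty over-/underdetermined blocks occur iff $\gbi$ has no perfect matching with (ii) the bound $\operatorname{rank} \le$ maximum matching size, but it only cites Murota for both facts. You instead supply short proofs: alternating paths for (i), and the Leibniz expansion or K\"onig's theorem for (ii). Your remark that the edges of $\gbi$ contain the nonzero pattern of $K/\Delta t - P/4$, so that numerical cancellation can only lower the rank, is exactly what justifies the paper's phrase ``regardless of the value of $\Delta t$.''
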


\begin{proof}
Recall some facts from system analysis (see, for example,~\cite[Chap.~2]{murota2010matrices}): (i) A DM decomposition of a bipartite graph $\gbi$ has a nonempty overdetermined block ($\mathcal{E}_0, \mathcal{U}_0$) or an underdetermined block ($\mathcal{E}_\infty, \mathcal{U}_\infty$) if and only if $\gbi$ does not admit a perfect matching,
and (ii) the rank of the matrix whose non-zero elements are expressed by $\gbi$ is equal to or less than the size of the maximum matching in $\gbi$.

From the assumption, the bipartite graph $\gbi$ for the target multi-symplectic PDE does not have a perfect matching, and thus the rank of the matrix  $({K}/{\Delta t} - {P}/{4})$ is strictly less than the problem size, regardless of the value of $\Delta t$. This means that the coefficient matrix is singular.
\end{proof}

Some additional comments are in order.
First, in practice, the DM decomposition for a specific PDE can be easily found by sight, without relying on computational algorithms. We will show it at the end of Step 2.
Second, we show further examples of unsolvable PDEs in~\ref{app:examples}. It is remarkable that even the Korteweg--de Vries equation, the benchmark PDE for various geometric numerical integrators, is structurally inconsistent with the diamond scheme.
Third, when the simple diamond scheme is structurally inconsistent, the higher-order diamond schemes using the Runge--Kutta collocation methods are singular (\ref{app:high-order}).
Fourth, the main aim of the system analysis is to filter out structurally inconsistent cases at the outset; we do not consider the actual (unique) solvability of the well-determined blocks here.
When the PDE (and accordingly the scheme) is nonlinear, the unique existence of (numerical) solutions should be shown independently.
Even when the PDE is linear, the system may be singular depending on the numerical values of the coefficients.
These should be checked for each specific PDE.
Fifth, one may nonetheless think that we may determine the solution by supplementing the rank deficiency with additional compatibility conditions, even when the coefficient matrix is singular.
This does not seem like a good idea for several reasons.
The primary obstacle is that, in doing so, all the update formulae of the diamond scheme become globally coupled, thereby losing its greatest advantage.
We illustrate this in the next proposition.
Its detail shall be given in~\ref{app:enforce}.

\begin{prop}
\label{prop:singularity_implies_fully_implicit}
Consider a linear multi-symplectic PDE, and suppose the coefficient matrix ${K}/{\Delta t} - {P}/{4}$ in the simple diamond scheme is singular.
If we introduce an additional compatibility constraint such that the scheme admits a unique solution, then the diamond scheme is globally coupled.
\end{prop}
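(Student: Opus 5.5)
Write $M := K/\Delta t - P/4$ and
\[
B(\dsv{z}{b},\dsh{z}{l},\dsh{z}{r}) := \frac{K}{\Delta t}\dsv{z}{b} + \frac{L}{\Delta x}(\dsh{z}{r}-\dsh{z}{l}) + \frac{P}{4}(\dsv{z}{b}+\dsh{z}{l}+\dsh{z}{r})
\]
for the right-hand side, so the local update reads $M\dsv{z}{t} = B$. The plan is to use the Fredholm alternative on this singular system and read off what the added constraint has to involve.

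Because $M$ is singular, there are nonzero vectors $\ell$ with $\ell^\top M = 0$ (left null vectors) and nonzero $m$ with $Mm = 0$. Two consequences follow. First, the update is solvable only if $\ell^\top B = 0$ for every left null vector $\ell$. Second, even when it is solvable, $\dsv{z}{t}$ is fixed only up to an element of $\ker M$. To obtain a unique solution, the additional compatibility constraint must remove the $\ker M$ component of the top value. Since $M$ contains no information about that component, the constraint has to link it to quantities outside the current diamond.

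Next, fix the spatial step on the diamond mesh and write the solvability condition $\ell^\top B = 0$ on diamond $(n,i)$. It is a linear relation among the values at the left, right and bottom vertices. These vertices are top values of the three lower diamonds $(n-\tfrac12,i-\tfrac12)$, $(n+\tfrac12,i-\tfrac12)$ and $(n,i-1)$. Each such top value has the form $z = z_p + \alpha m$, where $z_p$ is a particular solution and $\alpha$ is a free coefficient not yet determined. Substituting these forms into $\ell^\top B = 0$ turns the condition into a linear equation linking the free coefficient of diamond $(n,i)$'s left neighbour with that of its right neighbour. The coupling coefficient is $\ell^\top L m/\Delta x$, together with the $P/4$ contributions.

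Collecting these equations over one time layer with periodic boundary conditions gives a banded, cyclic linear system for the free parameters $(\alpha_n)_{n}$. The parameters are chained across all $N$ diamonds. So the unique solution, when it exists, requires solving an $N$-dimensional coupled system rather than independent local $d\times d$ problems. This is exactly the global coupling claimed in Proposition~\ref{prop:singularity_implies_fully_implicit}. The coupling is more visible for structurally inconsistent PDEs: by Proposition~\ref{prop:DM_implies_singular} the defect is independent of $\Delta t$. In the Hunter--Saxton example, for instance, $w_x=0$ constrains $\dsh{w}{r}-\dsh{w}{l}$, and $\dsv{\phi}{t}$ is free. The fix for $\phi$ then has to come from the next layer's constraint $\phi_x=u$, which ties each diamond to its spatial neighbours.

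The main obstacle is rigour: showing the coupling matrix is genuinely nonlocal, i.e.\ not block diagonal. I would first assume $\ell^\top L m \neq 0$ and handle the degenerate cases via the DM structure. If $\ell^\top L m = 0$ for all null pairs, the constraint cannot fix $\alpha$ at all, and a unique solution cannot be obtained by any such constraint. That makes the claim vacuous in that case, so it suffices to treat the non-degenerate case.
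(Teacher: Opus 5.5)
Your starting point matches the paper's proof. The local system $M\dsv{z}{t}=B$ is solvable only if $\ell^\top B=0$, and since $B$ contains $\dsh{z}{l},\dsh{z}{r}$, which neighbouring diamonds produce, this condition ties those diamonds together. You are also right to take $\ell$ in the \emph{left} kernel, a point the paper's proof glosses over. The paper's proof stops at this observation. You instead run the argument through the free kernel coefficients $\alpha$ of the lower diamonds. That only works when the one-step coefficients $\ell^\top(\mp L/\Delta x+P/4)m$ multiplying $\alpha_{\rm l},\alpha_{\rm r}$ are nonzero, and you dismiss the remaining case as vacuous. That dismissal is wrong, and it leaves a genuine gap.

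First, $\ell^\top Lm=0$ alone does not remove the coupling, because $\alpha_{\rm l}$ and $\alpha_{\rm r}$ both still carry the coefficient $\ell^\top Pm/4$. More seriously, even when all these coefficients vanish, the free components still reach \emph{later} compatibility conditions through other variables.

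Your own Hunter--Saxton example is exactly this case. $M$ has a zero row (from $w_x=0$) and a zero column (for $\phi$), so $\ell$ and $m$ are the corresponding unit vectors. Hence $\ell^\top Lm=\ell^\top Pm=0$, and $\ell^\top B=(w_{\rm r}-w_{\rm l})/\Delta x$ contains no $\alpha$. Yet the free values $\phi^{\rm t}$ propagate:
\begin{itemize}
\item half a step later they enter $u^{\rm t}$ through $-\phi_x=-u$, which is the well-determined equation for $u$, not a constraint fixing $\phi$ as you claim;
\item they then enter $v$ and $\eta$ through $\tfrac12u_t=-u\eta+v$ and $u_x=\eta$;
\item they then enter $w$ through the first equation.
\end{itemize}
So the conditions $w_{\rm r}=w_{\rm l}$ one full step later impose difference relations on the $\phi$-parameters along the whole layer. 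That is a globally coupled multi-layer system, not a vacuous case, so ``it suffices to treat the non-degenerate case'' fails.

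To repair this, drop the one-step parametrisation. All you need is that $\ell^\top B$ depends nontrivially on $(\dsh{z}{l},\dsh{z}{r})$, i.e.\ $\ell^\top L\neq 0$ or $\ell^\top P\neq 0$. This is a condition on $\ell$ alone, and it holds for Hunter--Saxton. Then $\ell^\top B=0$ is a relation among outputs of different diamonds, so those outputs must be determined jointly. Alternatively, keep your parametrisation but let the particular solutions depend on free parameters from several earlier layers.
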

\begin{proof}
From the singularity of the coefficient matrix, there exists a vector $v\neq 0$ in its kernel.
The scheme should be in the form
\[
\left( \frac{K}{\Delta t} - \frac{P}{4} \right) \dsv{z}{t} = b
\]
for some vector $b$ constructed by collecting all the terms except for $\dsv{z}{t}$ to the right hand side.
The vector should be in the range of ${K}/{\Delta t} - {P}/{4}$, which demands the compatibility condition $v^\top b = 0$.
Recall that $b$ includes $\dsh{z}{l}$ and $\dsh{z}{r}$ (unless otherwise the multi-symplectic form has no $Lz_x$ part, which is meaningless as a PDE), and accordingly the condition defines a relation between them.
Since the left and right vertices are shared in adjacent diamonds, the condition globally couples the diamond schemes.
\end{proof}

\section{Step 2: Estimating the (in)stability } \label{sec:step2}

If a PDE passes the first test (Step 1), we proceed to the second test, which estimates the stability of the diamond scheme.
The main aim of this step is to rule out extreme cases in which the scheme is unconditionally unstable.
As usual, we only consider linear stability analysis; we assume that the target PDEs are linear or linearized beforehand.

\subsection{Error propagation: Observation in the wave equation} \label{subsec:wave}

Let us first observe how errors are propagated in the scheme~\eqref{eq:diamond_update}, taking the wave equation $u_{tt} - u_{xx} = 0$ as an example.
As shown in Section 1, the wave equation can be written in the multi-symplectic form by setting $z = (u \; v \; w)$,
\[
\begin{pmatrix}
    0 & -1 & 0 \\
    1 & 0 & 0 \\
    0 & 0 & 0
\end{pmatrix} z_t
+
\begin{pmatrix}
    0 & 0 & 1 \\
    0 & 0 & 0 \\
    -1 & 0 & 0
\end{pmatrix} z_x
= \nabla \left\{ \frac{1}{2} \left( v^2 - w^2 \right)\right\},
\]
which is expanded to
\begin{empheq}[left=\empheqlbrace]{align}
    -\boxed{v_t} + w_x &= 0, \label{eq:wave:c1}\\
    \boxed{u_t} &= \boxed{v}, \label{eq:wave:c2}\\
    -u_x &= -\boxed{w}. \label{eq:wave:c3}
\end{empheq}
In Step 1, we should have considered the corresponding DM decomposition and accordingly revealed the (partial) order of calculation: $\{v, w \}$, then $u$.

\begin{figure}[htbp]
\centering
\includegraphics{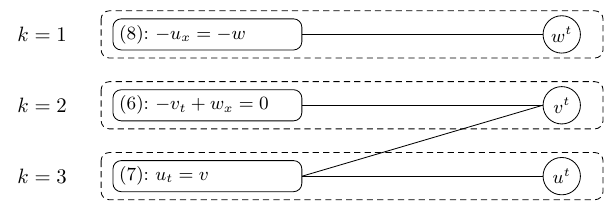}
\caption{Dulmage--Mendelsohn decomposition of the bipartite graph for the wave equation.}
\end{figure}

Let us see how the error included in some variable is propagated. To see this, let us write down the scheme~\eqref{eq:diamond_update} as follows.
\begin{align} \label{eq:wave:d}
    \left\{
    \begin{aligned}
        -\frac{\boxed{v^\mathrm{t}} - v^\mathrm{b}}{\Delta t} + \frac{w_\mathrm{r} - w_\mathrm{l}}{\Delta x} &= 0, \\
        \frac{\boxed{u^\mathrm{t}} - u^\mathrm{b}}{\Delta t} \phantom{{}+\frac{{w_\mathrm{r}} - w_\mathrm{l}}{\Delta x}} &= \frac{\boxed{v^\mathrm{t}} + v^\mathrm{b} + v_\mathrm{r} + v_\mathrm{l}}{4}, \\
        - \frac{u_\mathrm{r} - u_\mathrm{l}}{\Delta x} &= -\frac{\boxed{w^\mathrm{t}} + w^\mathrm{b} + w_\mathrm{r} + w_\mathrm{l}}{4}.\\
    \end{aligned}
    \right.
\end{align}
Rearranging terms, we have
\begin{align}
    \boxed{v^\mathrm{t}} &= v^\mathrm{b} +\frac{\Delta t}{\Delta x} (w_\mathrm{r} - w_\mathrm{l}),
    & \left(\text{which means}\;\dsv{v}{t}\; \leftarrow \; \dsv{v}{b}, \frac{\Delta t}{\Delta x} \{\dsh{w}{r}, \dsh{w}{l}\}  \right),
    \label{eq:KG1} \\
    \boxed{w^\mathrm{t}} &= \frac{4}{\Delta x} (u_\mathrm{r} - u_\mathrm{l}) -(w^\mathrm{b} + w_\mathrm{r} + w_\mathrm{l}),
    &  \left(\dsv{w}{t}\; \leftarrow \; \frac{4}{\Delta x}\{\dsh{u}{r},\; \dsh{u}{l}\},\; \{\dsv{w}{b}, \dsh{w}{r}, \dsh{w}{l}\}  \right),
    \label{eq:KG3}\\
    \boxed{u^\mathrm{t}} &= u^\mathrm{b} + \frac{\Delta t}{4} (\boxed{v^\mathrm{t}} + v^\mathrm{b} + v_\mathrm{r} +v_\mathrm{l}),
    & \left(\dsv{u}{t}\; \leftarrow \; \dsv{u}{b},\: \frac{\Delta t}{4} \{\dsv{v}{t}, \dsv{v}{b}, \dsh{v}{r}, \dsh{v}{l}\}  \right).
    \label{eq:KG2} \\
\end{align}
The rightmost explanations in $(\cdot)$ describe how the error propagates.
In~\eqref{eq:KG1}, the error in $\dsv{v}{b}$ is passed as is to $\dsv{v}{t}$;
those in $\dsh{w}{r}, \dsh{w}{l}$ is scaled by $\Delta t/\Delta x$ and then passed to $\dsv{v}{t}$.
The other two equations are similar.
Thus, if in the diamond a perturbation is added to $\dsv{u}{b}$, it should propagate to $\dsv{u}{t}$ in the following mechanism.
\begin{enumerate}
\item Propagates to $\dsh{w}{l}$ (and $\dsh{w}{r}$, respectively):  they are computed in the lower left (right, respectively) adjacent diamond as $\dsv{w}{t}$ by~\eqref{eq:KG3}. There, the error is multiplied by $4/\Delta x$.
\item Propagates to $\dsv{v}{t}$: by~\eqref{eq:KG1}, the error is further multiplied by $\Delta t/\Delta x$.
\item Propagates to $\dsv{u}{t}$: by~\eqref{eq:KG2}, the error is multiplied by $\Delta t/4$.
\end{enumerate}
In total, the initial error scales as $(\Delta t/\Delta x)^2$, and if $\Delta t > \Delta x$, the error in $u$ will grow exponentially in future diamonds, meaning the scheme is unstable.

Since the main aim of Step 2 of the stability analysis is to filter out unconditionally unstable cases, we are not interested in the precise ratio of $\Delta t$ and $\Delta x$, and the above analysis can be simplified.
Suppose $\Delta t \sim (\Delta x)^s$, where $s$ is a positive real number.
Then the above amplification factors are
$4/\Delta x \sim (\Delta x)^{-1}$,
$\Delta t/\Delta x \sim (\Delta x)^{s-1}$, and
$\Delta t/4 \sim (\Delta x)^{s}$.
This means the overall amplification factor is $\sim (\Delta x)^{2s-2}$.
This analysis suggests that it is more convenient to focus on the exponent of $\Delta x$, i.e., $(-1)+(s-1)+(-1)=2s-2$.
We call these indices the {\em amplification index} of a single update or its concatenation.
In the present case, if $s<1$, the error will blow up exponentially, rendering the scheme unstable.

This suggests that we consider such a graph as in Fig.~\ref{fig:graph_errors_simple_KG}.
The nodes $u, v,$ and $w$ represent the variables, and the directed edges mean the directions of the computations and their amplification indices.
We call the graph the {\em error-propagation graph (reduced version)} (the meaning of ``reduced version'' shall become clear later), and we denote it by $\gred$.
In $\gred$, we see a cycle $u\rightarrow w \rightarrow v \rightarrow u$ with the weight $2s-2$.
In the unstable case $s<1$, the cycle is negative, and in this sense, {\em the negative cycle is equivalent to the instability of the diamond scheme}.
In the next subsection, we will rigorously establish this statement in general cases.

\begin{figure}[htbp]
    \begin{center}
        \includegraphics{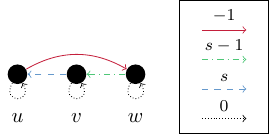}
    \end{center}
    \caption{(Reduced version of the) Error-propagation graph $\gred$ for the wave equation.}
    \label{fig:graph_errors_simple_KG}
\end{figure}

\subsection{Error propagation: General cases}

Let us consider the general PDE~\eqref{eq:multi-symplectic}
with the solution $z=(z_1\; z_2\; \ldots, z_d)$ ($d$ is the number of the variables),
and the corresponding diamond scheme~\eqref{eq:diamond_update}.
We assume the scheme is at least consistent (since inconsistent cases have already been eliminated in Step~1).

We provide a formal, general definition of the error-propagation graph (reduced version).
\begin{dfn}[Error-propagation graph (reduced version)]\label{def:error_graph}
    The {\em error-propagation graph (reduced version)} $\gred$ for a multi-symplectic PDE~\eqref{eq:multi-symplectic} and its simple diamond scheme~\eqref{eq:diamond_update} is the directed and weighted graph defined according to the rules:
    \begin{itemize}
    \item Nodes: Place a node for each variable  $z_i$ ($i\in \{1,\ldots,d\})$ in $z$.
    \item Edges: For each update formula (such as Eq.~\eqref{eq:KG1}--Eq.~\eqref{eq:KG2}) representing the target diamond scheme, place a directed edge from $z_i$ to $z_j$ ($i,j \in \{1,\ldots,d\}$)
    when $z_j$ is computed using $z_i$. The weight of the edge is the error amplification index of the error from $z_i$ to $z_j$.
    \end{itemize}
\end{dfn}

Below is our main result in this section.

\begin{thm}\label{thm:graph_stability}
    Consider a multi-symplectic PDE~\eqref{eq:multi-symplectic} and its simple diamond scheme~\eqref{eq:diamond_update} under the step size condition $\Delta t \sim (\Delta x)^s$ ($s>0$).
    The scheme is unstable if and only if the associated error-propagation graph contains a negative cycle for the choice of $s$.
    (If there is a negative cycle which persists for all $s>0$, then we say that the scheme is unconditionally unstable.)
\end{thm}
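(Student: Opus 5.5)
The plan is to treat the scheme as a linear time-marching map and read off its growth from path weights in $\gred$. After linearization, one full time step of the simple diamond scheme is a block linear map $z^{i+1} = A(\Delta t,\Delta x)\, z^{i}$ on the vector of all vertex values at a time level. The triangular (solve-from-upstream) order of the well-determined DM blocks from Step 1 lets $A$ be written as a finite product of elementary update maps, one per update formula such as \eqref{eq:KG1}--\eqref{eq:KG2}. Each nonzero entry of these factors scales like $(\Delta x)^{w}$, where $w$ is the weight of the corresponding edge of $\gred$ from Definition~\ref{def:error_graph}. The entry of $A^m$ linking a variable $z_i$ at one time to $z_j$ at a later time is then a sum over directed walks in the space--time unfolding of $\gred$. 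Each walk contributes a coefficient of order $(\Delta x)^{\sum w}$, where the sum runs over its edges. Spatial indices only shift along a walk, and a bounded number of walks of each length arise per step, so the combinatorial factors are at most exponential in the number of steps with $\Delta x$-independent base. Stability is understood as uniform boundedness of $\|A^m\|$ for $m\Delta t\le T$ as $\Delta x\to0$.

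\textbf{Sufficiency of a negative cycle.} Suppose $\gred$ contains a cycle $C$ of total weight $-\alpha<0$. Traverse $C$ repeatedly: after $k$ loops, which take a fixed number of time steps each, the amplification is of order $(\Delta x)^{-k\alpha}$. The requirement $m\Delta t\le T$ allows $k\sim T/\Delta t\sim (\Delta x)^{-s}$ loops, giving growth like $\exp(c(\Delta x)^{-s}\log(1/\Delta x))$, which is unbounded. To make this rigorous I would choose a perturbation localized on one variable and track the leading-order term of a specific entry of $A^m$. I also need to show that competing walks of the same order do not cancel. Two tools are available: the leading coefficient is a polynomial in the structural constants, hence generically nonzero, and the dominant eigenvalue of the principal block associated with the strongly connected component containing $C$ behaves like $(\Delta x)^{-\alpha/|C|}$, which is a min-plus (tropical) eigenvalue argument.

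\textbf{Necessity of a negative cycle.} Suppose all cycles are nonnegative. Then every walk decomposes into a simple path plus cycles, so its weight is at least $-W$, where $W$ bounds the negative weights of simple paths; the graph is finite, so $W$ is finite. Every entry of $A^m$ is therefore bounded by $(\Delta x)^{-W}$ times a walk count. I would absorb the walk count by showing that cycles of zero weight contribute a bounded amplification factor $1+O(\Delta t)$ per step; the wave-equation example, where the cycle has weight $2s-2$ at $s=1$, is the model case. Cycles of positive weight are damped, since their factor tends to $0$. Together these give the bound $\|A^m\|\le C(\Delta x)^{-W}e^{cT}$. Whether a polynomial-in-$\Delta x$ bound counts as ``stable'' then depends on the definition; I would adopt the weak (no exponential blow-up) notion consistent with the heuristic in Subsection~\ref{subsec:wave}.

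The main obstacle is the zero-weight borderline and the no-cancellation claim. A min-plus argument only controls orders of magnitude, whereas the actual constants, such as $\Delta t/\Delta x\le1$ in the wave equation, decide stability exactly at the borderline. I would therefore prove the theorem only for $s$ away from critical values, read ``negative cycle'' as a strict inequality, and defer the exact constants to Step~3. For cancellation, I would first try to show that a single dominant cycle forces a dominant eigenvalue via the Perron-type result for max-plus matrices applied to the valuation of $A$.
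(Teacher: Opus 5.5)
Your direction ``negative cycle $\Rightarrow$ instability'' is the paper's argument in walk-sum language. The converse you attempt (all cycles nonnegative $\Rightarrow$ stable) has a step that cannot be carried out, and the implication is in fact false.

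\textbf{The failing step.} It is ``zero-weight cycles contribute a factor $1+O(\Delta t)$ per step.''
\begin{itemize}
\item Zero-weight cycles are not confined to critical values of $s$. Each updated variable typically depends on its own old values with index $0$. This happens either through $K(z^\mathrm{t}-z^\mathrm{b})/\Delta t$ (e.g.\ $v^\mathrm{t}\leftarrow v^\mathrm{b}$ in \eqref{eq:KG1}) or through the average $z_0$ (e.g.\ $w^\mathrm{t}=-(w^\mathrm{b}+w_\mathrm{r}+w_\mathrm{l})+\cdots$ in \eqref{eq:KG3}).
\item The amplification index only says such factors are $\Theta(1)$. It cannot distinguish $1+O(\Delta t)$ from a constant $c>1$. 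With $m\sim T(\Delta x)^{-s}$ steps, $c^m=\exp(c'(\Delta x)^{-s})$, which violates even your weak notion of stability.
\item A bound of the form (index)$\,\times\,$(walk count) can therefore never prove stability. For the wave equation, the $w$-update alone already generates exponentially many weight-$0$ walks. Its known stability for $\Delta t\le\Delta x$ rests on sign cancellations that index bookkeeping cannot see.
\end{itemize}

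\textbf{The paper's own counterexample.} The ``good'' Boussinesq example settles the question. Its least cycle weight is $2s-4$, so there is no negative cycle for $s>2$. Yet the paper's Step~3 eigenvalue analysis requires $\Delta t\le c\,\Delta x^3$. The value $s=5/2$ is not a critical value, so restricting to non-critical $s$ does not rescue the argument. The paper says explicitly after its examples that the graph condition is only necessary.

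\textbf{What the paper actually proves.} Its ``sufficiency'' and ``necessity'' refer to the equivalence of two graph conditions, not to a stability equivalence:
\begin{itemize}
\item A negative returning path in $\gful$ projects onto one time level as a closed walk in $\gred$ of the same weight.
\item Conversely, each edge of $\gred$ lifts uniquely to $\gful$, so a negative cycle lifts to a negative returning path.
\end{itemize}
Add the lemma that the left/right symmetry lets any route be straightened into a vertical one, and the step ``repeat the route.'' Together these yield only that a negative cycle implies unbounded error growth. That is exactly your first direction, so drop the converse and read the theorem in that sense.

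\textbf{On cancellation.} Your concern is legitimate; the paper simply identifies a route's index with actual growth. But your proposed remedies work only generically:
\begin{itemize}
\item For the fixed coefficients of a given PDE, ``generically nonzero'' proves nothing.
\item Max-plus eigenvalues give the order of the dominant eigenvalue only under a genericity assumption. Otherwise they only bound the growth from above, which is the wrong direction for proving instability.
\item The per-step rate should be normalized by the number of time steps the cycle spans, not by $|C|$.
\end{itemize}
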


For its proof, we need some preparations.
Let us consider how an error in the variable $v_j$ ($j\in \{1,\ldots,d\})$ at some vertex $(n,i)$ expands. As time integration proceeds to $t=(i+1/2)\Delta t, (i+1)\Delta t, \ldots$, the error propagates to the vertices $(n\pm 1/2, i+1/2)$ (the error expands to the two vertices), $\{ (n, i+1), (n\pm 1, i+1)\}$ (three vertices), $\{ (n\pm 1, i+3/2), (n\pm 2, i+3/2)\}$ (four vertices), and so on, through variables one after another. Like the region of influence in hyperbolic partial differential equations, the region of influence of the error forms an inverted triangle shape.
We hope to identify the worst-case error propagation in that region and check whether it can go to infinity as $t\to\infty$ (indicating instability).
The difficulty is that, as time evolves, there are combinatorially many error-propagation routes.
However, we can limit ourselves to specific routes due to the symmetry of the diamond scheme.

The observations regarding route limitations are as follows. Mind that all the diamond employs the same update formulae.
\begin{itemize}
    \item {\bf Error propagation in one variable.}  The error in $v_j$ at the vertex $(n,i)$ will always propagate back to the same variable $v_j$ at some vertex $(n', i')$ with $i'>i$, since there are only finite number of variables. Thus, it is sufficient to focus on the error amplification in $v_j$ to identify unstable cases. The choice of $v_j$ is arbitrary.
    \item {\bf Symmetry in spatial direction.} The simple diamond scheme~\eqref{eq:diamond_update} includes $\dsh{z}{l}$ and $\dsh{z}{r}$ in a symmetric way (in terms of the amplification index). Thus, in the updates $\dsv{z}{b}\to \dsh{z}{l}$ and $\dsh{z}{r}$, the error expands with exactly the same amplification factor.
    And for the same reason, the variable $\dsv{z}{t}$ receives the scattered errors from $\dsh{z}{l}$ and $\dsh{z}{r}$ with the equal amplification index.
    (Note that we use the same notation $\dsv{z}{t}$ and so on for all the diamonds, omitting the precise time and spatial index.
    The update $\dsv{z}{b}\to \dsh{z}{l}$ (and $\dsh{z}{r}$, respectively) is actually done in the adjacent, lower left (and right) diamond.)
\end{itemize}

Let us more precisely define the terminology ``route'' and consider which routes we should consider. If the error in $v_j$ at the vertex $(n,i)$ propagates back to $v_j$ at $(n', i')$ and is amplified by the factor $(\Delta x)^q$ for some constant $q$, then we say ``there is a {\em route} from $(n,i)$ to $(n', i')$ in terms of $v_j$ with the amplification index $q$.''
The next lemma states that we can focus on a single route from $(n, i)$ to $(n, i')$ and its amplification index.
\begin{lem}
Suppose there is a route from the vertex $(n, i)$ to $(n', i')$ in terms of $v_j$ with the amplification index $q$. If $n$ and $n'$ are both integers or half-integers, then there exists a route from $(n, i)$ to $(n, i')$ in terms of $v_j$ with the same amplification index $q$. If $n$ and $n'$ are a set of integer and half-integer, then by doubling the original route, which becomes a route from $(n, i)$ to $(2n'-n, 2i' - i)$ with the index $2q$, we can find a route from $(n, i)$ to $(n, 2i'-i)$ with the index $2q$.
\end{lem}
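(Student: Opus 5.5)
We represent a route as a finite sequence of elementary propagation steps, each done inside a single diamond, and use the left--right symmetry of the scheme to reflect individual steps. Every elementary step moves the error from some variable at a vertex to another variable at a vertex of the same diamond lying strictly higher. There are four kinds:
\begin{itemize}
\item bottom $\to$ top, with displacement $(0,1)$;
\item left $\to$ top, with displacement $(+1/2,1/2)$;
\item right $\to$ top, with displacement $(-1/2,1/2)$;
\item bottom $\to$ left or bottom $\to$ right, handled in the lower-right or lower-left adjacent diamond, with displacement $(\mp 1/2, 1/2)$.
\end{itemize}
The amplification index of a route is the sum of the indices of its steps. By the symmetry in the spatial direction noted above, each step with horizontal displacement $+1/2$ has a mirror step with displacement $-1/2$. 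The mirror step involves the same pair of variables and has the same amplification index. Since all diamonds use the same update formulae, replacing a step by its mirror always gives another admissible step from the current vertex.

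For the first case, let $n,n'$ both be integers or both be half-integers. The net horizontal displacement $n'-n$ is then an integer, so the number of horizontal steps is even. We will reflect a suitable subset of the horizontal steps so that the signed sum of horizontal displacements becomes zero. This is possible because there are an even number of $\pm1/2$ moves, and we can choose exactly half of them to be $+1/2$. We perform the reflections sequentially along the route, since each step only depends on the vertex it starts from. The time displacements are unchanged, so the endpoint becomes $(n,i')$. The variables visited and the amplification indices are also unchanged, so the index is still $q$.

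For the second case, one of $n,n'$ is an integer and the other a half-integer. Here we first concatenate the route with a translated copy of itself, starting at $(n',i')$. This is allowed because the scheme is translation invariant along the diamond lattice: shifting by $(n'-n,i'-i)$ maps diamonds to diamonds whenever the parity is respected. This needs a check, because a half-integer horizontal shift must come with a half-integer time shift, which it does here. The doubled route goes from $(n,i)$ to $(2n'-n,2i'-i)$ in terms of $v_j$, with index $2q$. Its horizontal displacement $2(n'-n)$ is an integer, so the first case applies and gives a route from $(n,i)$ to $(n,2i'-i)$ with index $2q$.

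The main obstacle is to make the reflection argument rigorous. We must justify that reflecting one step leaves every later step admissible, and that the mirror steps really carry equal amplification indices. The latter holds because $\dsh{z}{l}$ and $\dsh{z}{r}$ enter Eq.~\eqref{eq:diamond_update} only through $\pm L(\dsh{z}{r}-\dsh{z}{l})/\Delta x$ and the symmetric average $z_0$, so the same power of $\Delta x$ appears for both. We also have to check parity consistency, namely that a vertex $(n,i)$ exists exactly when $n$ and $i$ are both integers or both half-integers. Given that, the count of horizontal steps has the right parity in each case.
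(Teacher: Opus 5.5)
Your proposal is correct and follows the paper's argument. You mirror horizontal $\pm 1/2$ steps using the left--right symmetry of the update, and in the mixed-parity case you concatenate the route with its translate starting from $(n',i')$ before reflecting. The only difference is that the paper reflects greedily to stay near $n$, whereas you fix a balanced sign assignment; the two are equivalent. Your parity count and translation-invariance check make explicit what the paper leaves implicit. Your list of elementary steps omits same-vertex steps such as $v^{\mathrm{t}}\to u^{\mathrm{t}}$, but this is harmless because they have zero displacement.
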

\begin{proof}
    Since the error propagation is symmetric in space (in terms of the amplification index), at every update $\dsv{z}{b}$ to $\dsh{z}{l}$ and $\dsh{z}{r}$, we can choose one from them such that the spatial index stays closer to the original index $n$, without changing the amplification factor.
    This can be repeated until we reach the time index $i'$.
    If $n$ and $n'$ are both integers or half-integers, then this gives the desired route.

    If, for example, $n$ is an integer and $n'$ is a half-integer, by the above procedure, the route can only come back to the spatial index $n \pm (1/2)$.
    In that case, repeat the original route from $(n', i')$, which makes a new route from $(n,i)$ to $(2n'-n, 2i'-i)$ with the amplification index $2q$.
    By applying the above procedure, we find a route that comes to the spatial index $n$.
\end{proof}

This lemma implies that it is sufficient to consider the vertical cascade of single diamonds rather than the entire region of influence.
Furthermore, since error propagation along the left and right sides is identical in terms of amplification, we can fix the route to pass through, say, the left side of the diamonds.
With this reduction, we can forget about the spatial index and focus on error propagation in the time direction.

The observation suggests that we consider the following graph, which starts at the time level $t=i\Delta t$ and explicitly shows the full time-evolution structure.
An example for the case of the wave equation is given in Fig.~\ref{fig:graph_errors_WE_local}.
The graph is infinitely long in the time direction, and we only show the initial part (which is enough for the subsequent analysis). Note that the graph's structure is uniform in time since the update formulae are the same in the diamonds.
\begin{dfn}[Error-propagation graph (full version)] \label{def:tepg}
The {\em error-propagation graph (full version)} $\gful$ for a multi-symplectic PDE~\eqref{eq:multi-symplectic} and its simple diamond scheme~\eqref{eq:diamond_update} is the directed and weighted graph defined according to the rules:
    \begin{itemize}
    \item Nodes: Place a node for each variable  $z_l$ ($l\in \{1,\ldots,d\})$ in $z$ for each time $t=j\Delta t$ (j=i, i+1/2, i+1, i+3/2, \ldots).
    \item Edges: For each update formula (such as Eq.~\eqref{eq:KG1}--Eq.~\eqref{eq:KG2}) representing the target diamond scheme, place directed edges according to the following procedure. We start with the time level $t=i\Delta t$, and set $j:=i$ (the initial data for $j$).
    \begin{itemize}
    \item[(i)] For every calculation from the time level $j$ to $j+1/2$, draw a directed edge upward between the corresponding variables, with the corresponding amplification index.
    \item[(ii)] Do the same for the calculations from the time level $j$ to $j+1$.
    \item[(iii)] Setting $j:=j+1/2$, go back to (i) and repeat the procedure (until some designated time level).
    \end{itemize}
    \end{itemize}
\end{dfn}

   \begin{figure}[htbp]
    \begin{center}
    \includegraphics{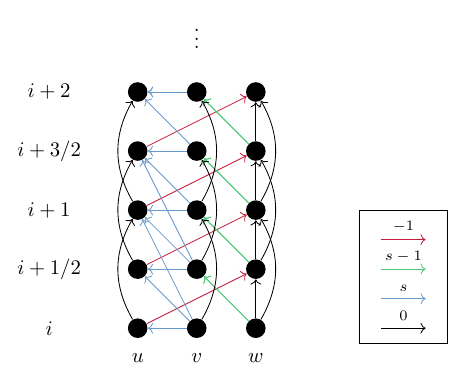}
    \caption{Error-propagation graph $\gful$ (full version; wave equation case).}
    \label{fig:graph_errors_WE_local}
    \end{center}
    \end{figure}

\begin{proof}[Proof of Thm.~\ref{thm:graph_stability}]
    Suppose we have a route from a vertex $(n, i)$ to $(n, i')$ for some $i'>i$ with the amplification factor $q(s)<0$ in terms of a variable, say, $v_1$, where $s>0$ comes from the assumption $\Delta t \sim (\Delta x)^s$. The notation $q(s)$ means the overall amplification index of the route depends on $s$.
    By repeating the same route, we see that the initial error in $v_1$ grows arbitrarily large, which means numerical instability.

    Construct the error-propagation graph $\gful$  for the case.
    There, the route with the amplification factor $q(s)$ appears as a directed path from $v_1$ at the time level $i$ to the same variable at some time level $i'>i$.
    Below, we call such a path a {\em returning path}.
    When its amplification factor $q(s)$ is negative, then we call it a {\em negative returning path.}
    (In case of the wave equation (Fig.~\ref{fig:graph_errors_WE_local}), $z_1=u$, and the path $u(i)\xrightarrow{-1}w(i+1/2)\xrightarrow{s-1}v(i+1)\xrightarrow{s}u(i+1)$ is a returning path, where the arguments of $u, w, v$ denote the time levels. The amplification factor of the returning path is $2s-2$, and it is negative for $s<1$.)

    We show that an error-propagation graph $\gful$ includes a negative returning path if and only if the associated reduced graph $\gred$ defined in Def.~\ref{def:error_graph} has a negative cycle.

    {\bf Sufficiency.} Suppose the full graph $\gful$ has a negative returning path.
    Projecting all the nodes and edges for the time levels $j>i$ to the time level $i$ (i.e., by compressing the graph $\gful$ in time direction), we find a cycle corresponding to the negative returning path in the reduced graph $\gred$. The weight on each edge remains the same, so the overall amplification factor stays the same; i.e., the cycle is negative.

    {\bf Necessity.} Suppose the reduced graph $\gred$ contains a negative cycle. We show that the cycle can be elevated to a corresponding negative path on the full graph $\gful$.
    Choose one variable in the negative cycle in $\gred$, say $z_j$, and suppose we start from $z_j$ at the time level $i$ in the full graph $\gful$.
    The negative cycle in $\gred$ identifies a unique transition from $z_j$ to the next variable $z_{j'}$. In the full graph $\gful$, there uniquely exists the corresponding directed edge from $z_j$ to $z_{j'}$; follow the edge. Repeat this until we return to $z_j$, which yields the desired negative path.
    (For the wave equation case, we take, for example, $z_j:=u$. Then in the negative cycle in Fig.~\ref{fig:graph_errors_simple_KG} demands us to find a path that evolves in the order $u\to w\to v\to u$. In the full graph $\gful$ Fig.~\ref{fig:graph_errors_WE_local}, this uniquely represents the path $u(i)\xrightarrow{-1}w(i+1/2)\xrightarrow{s-1}v(i+1)\xrightarrow{s}u(i+1)$.)
\end{proof}

Thm.~\ref{thm:graph_stability} shows that it is sufficient to consider $\gred$, which can be easily constructed from the multi-symplectic form of the target PDE.

Below, we show examples of PDEs for which the diamond scheme is unconditionally unstable.
It is an important caveat of the diamond multi-symplectic method that the schemes can be useless, however small $\Delta t$ is, even if the scheme itself is solvable and numerical solutions exist.
In the rest of this section, when we say error-propagation graph, we mean the reduced graph $\gred$.

\subsubsection{Mixed derivative type Klein--Gordon equation}
The mixed derivative type linear Klein--Gordon equation (also known as the linear Klein--Gordon under the light-cone coordinate) reads $u_{tx} = au$, where $a$ is a real constant. This can be written in the following multi-symplectic form by setting $z = (u\; v\; w)$.
\begin{equation*}
    \begin{split}
        \frac{1}{2}\boxed{v_t} + w_x &= -a\boxed{u},\\
        -\frac{1}{2}\boxed{u_t} &= \boxed{w}, \\
        -u_x &= \boxed{v}.
    \end{split}
\end{equation*}
The error-propagation graph for this equation is as follows.
This can be easily constructed by the observations described at the end of Section~\ref{subsec:wave}.
\begin{figure}[htbp]
    \begin{center}
        \includegraphics{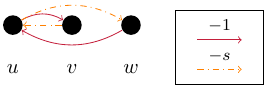}
    \end{center}
    \caption{The error propagation graph for the mixed-derivative type Klein--Gordon equation $u_{tx} = u$}
    \label{fig:graph_errors_mKG}
\end{figure}
This graph has \emph{two negative cycles with the index $-s-1$} (recall that we assume $\Delta t\sim (\Delta x)^s$, and $s$ is a positive number; $-s-1$ is negative for any positive $s$. This notice also applies to the examples below). Therefore, the simple diamond scheme applied to this equation is always unstable. This instability can be confirmed by numerical experiments. For example, we consider $u_{tx} = -\pi^2 u$ over the domain $[-1, 1]$. The initial value $u(0, x)$ was set to $\cos(\pi x)$, as shown in Fig.~\ref{fig:experiment_mLKG_init}, and $\Delta x = 0.05, \, \Delta t = 10^{-4}$. The time evolution of the simple diamond scheme after one time step is shown in Fig.~\ref{fig:experiment_mLKG_1step}. The numerical solution $u(\Delta t, x)$ is significantly different from the exact solution $u(\Delta t, x) = \cos(\pi (x + \Delta t))$, which indicates the instability of the scheme.

\begin{figure}[htbp]
    \begin{minipage}[t]{.48\linewidth}
        \begin{center}
            \includegraphics[width=75mm]{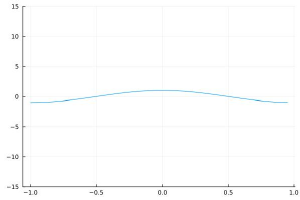}
            \caption{Initial condition $u(0, x) = \cos(\pi x)$ for the mixed-derivative type Klein--Gordon equation $u_{tx} = -\pi^2 u$.}
            \label{fig:experiment_mLKG_init}
        \end{center}
    \end{minipage}
    \hfill
    \begin{minipage}[t]{.48\linewidth}
        \begin{center}
            \includegraphics[width=75mm]{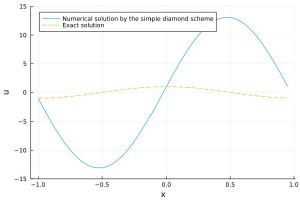}
            \caption{Exact solution and numerical solution after one time step obtained by the simple diamond scheme for $u_{tx} = -\pi^2 u$.}
            \label{fig:experiment_mLKG_1step}
        \end{center}
    \end{minipage}
\end{figure}

\subsubsection{Improved Boussinesq equation}
The improved Boussinesq equation $u_{tt} - u_{xx} = -u_{xxtt} + (u^2)_{xx}$ can be written in the following multi-symplectic form~\cite{CLB13} by setting $z = (u \; v \; n \; w \; p \; q)$:
\begin{equation*}
    \begin{split}
        \boxed{w_t} - p_x &= \boxed{u} + u^2,\\
        -\boxed{q_t} - n_x - w_x &= -\boxed{v}, \\
        v_x &= -\boxed{n}, \\
        -\boxed{u_t} + v_x &= 0, \\
        u_x &= \boxed{q}, \\
        v_t &= \boxed{p}.
    \end{split}
\end{equation*}

This is a nonlinear equation.
We assume small solutions, and ignore the nonlinear term $u^2$ (this is why  this term is not marked as unknown). The error propagation graph for this equation is shown in Fig.~\ref{fig:graph_errors_improved_Boussinesq}.
\begin{figure}[htbp]
    \begin{center}
        \includegraphics{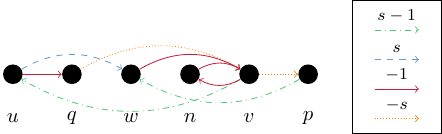}
    \end{center}
    \caption{The error propagation graph for the improved Boussinesq equation $u_{tt}-u_{xx}=-u_{xxtt}+(u^2)_{xx}$}
    \label{fig:graph_errors_improved_Boussinesq}
\end{figure}
This graph has two negative cycles: the cycle between $n$ and $v$, and the cycle $w\to v\to p\to w$, whose indices are $-2$.
These variables will quickly become large even if we start from an initial data set small enough. This means the simple diamond scheme is not appropriate for this equation.

\subsubsection{Ostrovsky equation}
The Ostrovsky equation $u_{tx} + (\alpha u u_x)_x - \beta u_{xxxx} = \gamma u$ can be written in the following multi-symplectic form~\cite{MYM11} by setting $z = (\phi \; u \; v \; w)$:
\begin{equation*}
    \begin{split}
        -\frac{1}{2} u_t - w_x &= -\gamma \phi, \\
       \frac{1}{2}\phi_t - v_x &= w -\frac{\alpha}{2} u^2, \\
       u_x &= \frac{v}{\beta} ,\\
       \phi_x &= u.
    \end{split}
\end{equation*}

Again, we assume small solutions and ignore the nonlinear term $u^2$. Then the error-propagation graph resembles Fig.~\ref{fig:graph_errors_Ostrovsky}.
\begin{figure}[htbp]
    \begin{center}
        \includegraphics{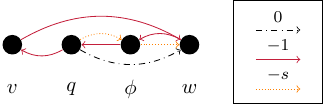}
    \end{center}
    \caption{The error propagation graph for the Ostrovsky equation $u_{tx}+ (\alpha u u_x)_x - \beta u_{xxxx}=\gamma u$}
    \label{fig:graph_errors_Ostrovsky}
\end{figure}
This graph has many negative cycles, and the diamond scheme is not appropriate.

At the end of this section, we note two important facts. First, the same analysis can be applied to high-order diamond schemes, observing the update formulae of diamond in the same manner as for the simple diamond scheme. We expect that the same stability condition on $\Delta t$ as for the simple diamond scheme will hold in this case as well, but a detailed study is left for future work.
Second, Theorem~\ref{thm:graph_stability} claims that, if the simple diamond scheme is to be stable for a PDE, then we need to choose $s$ (in the condition $\Delta t \sim (\Delta x)^s$) such that the error-propagation graph does not have any negative cycle for the choice of $s$.
Note that this is just a necessary condition for stability, and is generally not sufficient.
In the case of the wave equation, the condition $ s\ge 1$  is also sufficient, as is widely known.
On the other hand, as we will see later, for the ``good'' Boussinesq equation, the necessary condition is not sufficient.
In fact, the graph analysis yields a necessary condition $s \ge 2$, while numerical experiments show that actually we need $\Delta t = O(\Delta x^3)$, namely $s \ge 3$.
To fill the gap between the necessary and sufficient conditions, we need the precise eigenvalue analysis presented in the next section.
Still, the graph analysis has the strong advantage that it can be performed very quickly from the multi-symplectic form and filters out any unconditionally unstable cases (and also detects cases where very severe stability conditions, $\Delta t = O(\Delta x^s)$ with large $s$, are required).
The eigenvalue analysis in the next section provides more precise information on stability but requires constructing scheme matrices and computing their numerical eigenvalues.

\subsection{A shortcut recipe for Steps 1 and 2}

Once we understood the analyses in Steps 1 and 2, we see that they can be done simultaneously only by carefully watching the continuous system~\eqref{eq:multi-symplectic}.
The following gives a practical recipe.

\begin{enumerate}
    \item {\bf Write down the concrete form of the equations.} There are $d$ equations in~\eqref{eq:multi-symplectic}.
    \item {\bf Locate terms corresponding to the unknown discrete variables.}
    The unknown variables in the discrete scheme arise from the time-derivative terms or the terms on the right-hand side (RHS). Mark them first (the \boxed{\rm boxed} terms in the preceding discussions).
    \item {\bf Find obvious directions of the computations.} If the left-hand side (LHS) does not include a time derivative, then the LHS does not include an unknown variable, and the computation should be from the LHS to the RHS to find a new value in the RHS.
    Similarly, if the RHS is $0$, then the equation defines an update from the RHS to the LHS.
    Write the direction in the system, and identify the marked variable as ``updated.''
    \item {\bf Repeatedly find the rest of the directions.} By the ``updated'' variables, some other equations should become updatable. Update them, and repeat the procedure until all the variables are updated.
    If, during the procedure, we find multiple formulae that attempt to update the same variable, or, conversely, a variable for which no updating formula exists, the system is structurally inconsistent.
    \item {\bf Identify the error amplification factors.} Spatial derivatives (such as in $w_x$ and $u_x$) amplify the error in the variable by the factor $1/\Delta x$.
    Time derivatives (such as $v_t$ and $u_t$) amplify the error in the other variables by the factor $\Delta t$.
    Write them in the updates.
\end{enumerate}

Steps 3 and 4 in the recipe manually construct the Dulmage--Mendelsohn decomposition of $\gbi$ for the target PDE.
This is possible since in the multi-symplectic PDEs, the system is small and sparse.
For example, for the wave equation case, we find the following diagram after the procedure ends. The number \ctext{$i$} means the order of computation.
This is enough to reproduce the error-propagation graph $\gred$ above and is much faster.
\begin{align}
    \left\{
    \begin{aligned}
        -\boxed{v_t} + w_x &= 0, & \qquad & (\ctext{1}\; v\leftarrow w\; (\text{amplification } \Delta t/\Delta x\; (\text{index } s-1)))\\
        \boxed{u_t} &= \boxed{v}, & \qquad &(\ctext{3}\; u\leftarrow v\; (\text{amplification }\Delta t\; (\text{index } s)))\\
        -u_x &= -\boxed{w} & \qquad &(\ctext{2}\; u \rightarrow w\; (\text{amplification }1/\Delta x\; (\text{index } -1))).
    \end{aligned}
    \right.
\end{align}

\section{Step 3: Stability Analysis by Eigenvalues} \label{sec:eigen}

When we need a tight sufficient condition for stability, we may need precise eigenvalue analysis, as shown below.
It serves as a refinement of the graph-based necessary condition.
We assume that the target PDE is linear.

\subsection{Simple diamond scheme case}

To explicitly state the analysis procedure, let us consider the case where the multi-symplectic PDE~\eqref{eq:multi-symplectic} has three variables $z = (u \; v \; w)$.
The analysis easily extends to other cases.
Let $\bm{Z}^0$ be the vector of all the numerical solutions of $z$ at each mesh point in the initial state, namely
\[
\bm{Z}^0 = (u_0^0, v_0^0, w_0^0, u_0^{1/2}, v_0^{1/2}, w_0^{1/2}, u_1^0, v_1^0, w_1^0, \cdots, u_{N-1}^0, v_{N-1}^0, w_{N-1}^0, u_{N-1}^{1/2}, v_{N-1}^{1/2}, w_{N-1}^{1/2})^\top,
\]
where $u_i^0$ is the numerical solution of $u$ at the mesh point $(i \Delta x, 0)$, and $u_i^{1/2}$ is at $((i+1/2) \Delta x, \Delta t/2)$ (namely,  $\bm{Z}^0$ represents the cyan zig-zag line in Fig.~\ref{fig:diamond_mesh_zigzag1}). Note that we can skip the solution at $x = \ell$ ($i = N$) since we assume the periodic boundary condition.

\begin{figure}[htbp]
    \centering
    \includegraphics{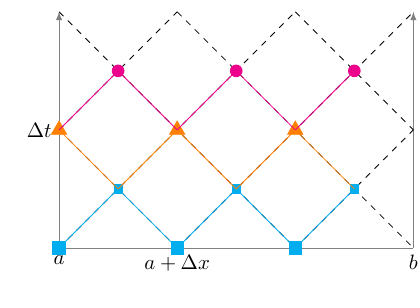}
    \caption{The diamond mesh with zig-zag line}
    \label{fig:diamond_mesh_zigzag1}
\end{figure}
The first half step of the simple diamond scheme updates the solutions on the cyan zig-zag line to those on the orange zig-zag line to find
\[
\bm{Z}^{1/2} = (u_0^1, v_0^1, w_0^1, u_0^{1/2}, v_0^{1/2}, w_0^{1/2}, u_1^1, v_1^1, w_1^1, \cdots, u_{N-1}^1, v_{N-1}^1, w_{N-1}^1, u_{N-1}^{1/2}, v_{N-1}^{1/2}, w_{N-1}^{1/2})^\top.
\]
Since now we assume that the target PDE is linear, the simple diamond scheme~\eqref{eq:diamond_update} is linear in terms of the unknown variables, and thus, by rearranging the terms, we find an update formula of the form
\[
\bm{Z}^{1/2} = M_1 \bm{Z}^0,
\]
where $M_1$ is a $6N \times 6N$ matrix expressing the diamond scheme for the target PDE.
To derive the structure of $M_1$ explicitly, we first write the single-diamond update in a linear-algebraic form. Let $z_i^0 := (u_i^0, v_i^0, w_i^0)^\top$, $z_i^{1/2} := (u_i^{1/2}, v_i^{1/2}, w_i^{1/2})^\top$, and $z_i^{1} := (u_i^{1}, v_i^{1}, w_i^{1})^\top$. Consider the diamond whose bottom, left, right, and top vertices are, respectively,
\[
z^b = z_i^0,\qquad z^l = z_{i-1}^{1/2},\qquad z^r = z_{i}^{1/2},\qquad z^t = z_i^{1},
\]
where the periodic boundary condition implies $z_{-1}^{1/2}=z_{N-1}^{1/2}$.
Assuming linearity $\nabla S(z)=Pz$ with a constant matrix $P$, and using $z^0=(z^t+z^b+z^l+z^r)/4$ in~\eqref{eq:diamond_update}, we obtain
\[
\left( \frac{1}{\Delta t} K - \frac{1}{4} P \right) z^t
=\left( \frac{1}{\Delta t} K + \frac{1}{4} P \right) z^b +
\left( \frac{1}{\Delta x} L + \frac{1}{4} P \right) z^l +
\left(- \frac{1}{\Delta x} L + \frac{1}{4} P \right) z^r .
\]
When the scheme is solvable, $\left( \frac{1}{\Delta t}K - \frac{1}{4}P \right)$ is invertible, and hence
\[
z^t = B z^b + A^- z^l + A^+ z^r,
\]
with the $3\times 3$ blocks defined by
\[
B = \left(\frac{1}{\Delta t}K-\frac{1}{4}P\right)^{-1}\left(\frac{1}{\Delta t}K+\frac{1}{4}P\right),\quad
A^- = \left(\frac{1}{\Delta t}K-\frac{1}{4}P\right)^{-1}\left(\frac{1}{\Delta x}L+\frac{1}{4}P\right),\quad
A^+ = \left(\frac{1}{\Delta t}K-\frac{1}{4}P\right)^{-1}\left(-\frac{1}{\Delta x}L+\frac{1}{4}P\right).
\]
For example, in the case of the wave equation, $A^-, B, A^+$ are given by
\[
B = \begin{pmatrix}
    1 & \frac{\Delta t}{2} & \\
    & 1 & \\
    & & -1
\end{pmatrix}, \qquad
A^- = \begin{pmatrix}
    & \frac{\Delta t}{4} & -\frac{\Delta t^2}{4\Delta x} \\
    & & -\frac{\Delta t}{\Delta x} \\
    -\frac{4}{\Delta x} & & -1
\end{pmatrix}, \qquad
A^+ = \begin{pmatrix}
    & \frac{\Delta t}{4} & \frac{\Delta t^2}{4\Delta x} \\
    & & \frac{\Delta t}{\Delta x} \\
    \frac{4}{\Delta x} & & -1
\end{pmatrix}
\]
Substituting $(z^t,z^b,z^l,z^r)=(z_i^{1},z_i^{0},z_{i-1}^{1/2},z_i^{1/2})$ gives the update on the orange zig-zag line:
\[
z_i^{1}=Bz_i^{0}+A^-z_{i-1}^{1/2}+A^+z_{i}^{1/2}\qquad (i=0,\dots,N-1).
\]
Moreover, in the first half step the half-integer values in $\bm{Z}_0$ are carried over to $\bm{Z}_{1/2}$ without change, namely
\[
z_i^{1/2} = I\, z_i^{1/2} \qquad (i=0,\dots,N-1),
\]
which generates the identity blocks in $M_1$. Stacking the above two equations for all $i$ in the ordering of $\bm{Z}^0$ and $\bm{Z}^{1/2}$ yields a block-circulant $M_1$. Note that the periodicity makes the term $A^-z_{i-1}^{1/2}$ appear in the upper-right corner when $i=0$. Consequently, $M_1$ takes the block diagonal and circulant form
\[
M_1 =
\begin{pmatrix}
    B & A^+ &  & & & & & & A^-\\
    & I & \\
    & A^- & B & A^+ & \\
    &     &   & I & \\
    &     &   &   & & \\
    &     &   &   & & \ddots \\
    &     &   &   & & \\
    & &     &   &   & & A^- & B & A^+ \\
    & &     &   &   & &  &  & I
\end{pmatrix}
\]
where $I$ is the identity matrix of size $3 \times 3$, and the blank elements are the zero matrices $O$ of the corresponding size.

Next, the solutions on the magenta zig-zag line in Fig.~\ref{fig:diamond_mesh_zigzag1}:
\[
\bm{Z}^1 =  (u_0^1, v_0^1, w_0^1, u_0^{3/2}, v_0^{3/2}, w_0^{3/2}, u_1^1, v_1^1, w_1^1, \cdots, u_{N-1}^1, v_{N-1}^1, w_{N-1}^1, u_{N-1}^{3/2}, v_{N-1}^{3/2}, w_{N-1}^{3/2})^\top
\]
should be determined from $\bm{Z}^{1/2}$ (those on the orange zig-zag line in Fig.~\ref{fig:diamond_mesh_zigzag1}) as
\[
\bm{Z}^{1} = M_2 \bm{Z}^{1/2},
\]
where $M_2$ is a $6N \times 6N$ matrix, again expressing the diamond scheme. Similarly to $M_1$,  $M_2$ takes the form
\[
M_2 =
\begin{pmatrix}
    I &  &  & & & & & \\
    A^- & B & A^+ \\
    & & I \\
    & & A^-& B & A^+ \\
    &     &   &   & & \\
    &     &   &   & & \ddots \\
    &     &   &   & & \\
    &     &   &   & &  & & I &  \\
    A^+ &     &   &   & &  &  & A^- & B
\end{pmatrix}.
\]

Concatenating the updating matrices $M_1$  and $M_2$, we find the updating matrix
\[
M = M_2 M_1,
\]
which advances the numerical solutions by $\Delta t$.
The general numerical solutions at time $t = j \Delta t$ are expressed as $\bm{Z}^j = M^j \bm{Z}^0$.
The eigenvalues of $M$ identify the growth rate of the numerical errors in the numerical solutions.

The eigenvalue problem can be decomposed into smaller problems if we take the circulant structure of $M_1$ and $M_2$, and accordingly $M$, into account.
Let $X_1, Y_1, X_2, Y_2$ be $6 \times 6$ matrices defined as follows.
\[
X_1 = \begin{pmatrix}
    B & A^+ \\
    O & I
\end{pmatrix},\;
Y_1 = \begin{pmatrix}
    O & A^- \\
    O &  O
\end{pmatrix},\;
X_2 = \begin{pmatrix}
    I & O  \\
    A^- & B
\end{pmatrix},\;
Y_2 = \begin{pmatrix}
    O & O \\
    A^+ & O
\end{pmatrix}.
\]
Then, $M_1$ and $M_2$ can be rewritten as
\[
M_1 = \begin{pmatrix}
    X_1 &     & & & & & Y_1  \\
    Y_1 & X_1 & \\
    & Y_1 & X_1 & \\
    &     & & & \\
    &     & & & \ddots \\
    &     & & & & \\
    &     & & & & Y_1 & X_1\\
\end{pmatrix}, \qquad
M_2 = \begin{pmatrix}
    X_2 & Y_2    & & & & &   \\
    & X_2 & Y_2 \\
    & & X_2 & Y_2 & \\
    &     & & & \\
    &     & & & \ddots \\
    &     & & & & \\
    Y_2&     & & & & & X_2\\
\end{pmatrix}.
\]
Accordingly, $M = M_2 M_1$ becomes
\begin{equation}\label{eq:update_eq_simple_diamond}
M =
\begin{pmatrix}
    X_2 X_1 + Y_2 Y_1 & Y_2 X_1 & & & & X_2 Y_1 \\
    X_2 Y_1 & X_2 X_1 + Y_2 Y_1 & Y_2 X_1 & \\
    & X_2 Y_1 & X_2 X_1 + Y_2 Y_1 & Y_2 X_1 & \\
    \\
    & & & & \ddots \\
    \\
    Y_2 X_1& & & & X_2Y_1 & X_2 X_1 + Y_2 Y_1
\end{pmatrix}.
\end{equation}
Denoting $C_0 = X_2 X_1 + Y_2 Y_1$, $C^+ = Y_2 X_1$, and $C^- = X_2 Y_1$, we can further rewrite $M$ as
\[
M=
\begin{pmatrix}
    C_0 & C^+ & & & & C^- \\
    C^- & C_0 & C^+ & \\
    & C^- & C_0 & C^+ & \\
    \\
    & & & & \ddots \\
    \\
    C^+& & & & C^- & C_0
\end{pmatrix}.
\]
This block circulant matrix is similar to the following block diagonal matrix:
\[
\Lambda = \mathrm{diag} \; (\Lambda_0, \; \Lambda_1, \ldots , \Lambda_{N-1}),
\]
where
\[
\Lambda_k = C_0 + \zeta_N^k C^+ + \zeta_N^{-k} C^-
\]
and $\zeta_N$ is the $N$-th root of unity, i.e. $\zeta_N = \exp(2 \pi i / N)$.
Thus, it is sufficient to numerically calculate the eigenvalues of  $\Lambda_k$ (k=0,1,\ldots, N-1), which are $6 \times 6$ matrices.
Hereinafter, let $\lambda_1$ denote the dominant eigenvalue of $M$, that is, the eigenvalue that has the largest absolute value. The scheme is stable if and only if $|\lambda_1| \leq 1$.
This can give a tighter stability estimate (see the examples below).

\subsection{Higher-order diamond scheme case} \label{subsec:high-order}
The analysis is similar to the previous case, but the structure of the update matrix becomes more complicated.

Let us first identify the updating matrix in each diamond.
In each diamond, one inputs $\{\tilde{z^\mathrm{b}}, \tilde{z_\mathrm{l}}\}$, and outputs $\{\tilde{z^\mathrm{t}}, \tilde{z_\mathrm{r}}\}$.
There, we first compute the collocated values $Z_i^j$ by solving the following equation:
\begin{equation}\label{eq:update_diamond_RK}
    \nabla S(\bm{Z}_i^j) = \widetilde{K} f_{jk} \bm{Z}_i^k + \widetilde{L} f_{ik} \bm{Z}_k^j - \widetilde{K} \left(\sum_k f_{jk}\right) \tilde{z}^\mathrm{b}_i - \widetilde{L} \left(\sum_k f_{ik}\right) \tilde{z}_\mathrm{l}^j.
\end{equation}
Due to the linearity, we can write $\nabla S(\bm{Z}_i^j)=P\bm{Z}_i^j$ with some constant matrix $P$.
Let us also introduce the following notation.
\begin{equation*}
    \begin{split}
        \bm{\mu} &= \left(\sum_{k = 1}^r f_{1k}, \sum_{k = 1}^r f_{2k}, \ldots , \sum_{k = 1}^r f_{rk}  \right)^\top, \\
        \bm{\beta} &= \bm{b}^\top F, \\
        Q &= I_{r^2} \otimes P - I_r \otimes F \otimes \widetilde{K} - F \otimes I_r \otimes \widetilde{L}, \\
        D_\mathrm{b} &= -(I_r \otimes \bm{\mu} \otimes I_d) (I_r \otimes \widetilde{K}), \\
        D_\mathrm{l} &= -(\bm{\mu} \otimes I_r \otimes I_d) (I_r \otimes \widetilde{L}), \\
    \end{split}
\end{equation*}
where $F = (f_{ij})_{i,j=1}^r$ is the inverse of the Runge--Kutta matrix.
Then Eq.~\eqref{eq:update_diamond_RK} can be rewritten in the following form:
\begin{align} \label{eq:high-order:system}
Q\bm{z} = D_\mathrm{b} \tilde{z}^\mathrm{b} + D_\mathrm{l} \tilde{z}_\mathrm{l},
\end{align}
where $\bm{z} = (Z_1^1, Z_1^2, \ldots , Z_1^r, Z_2^1, \ldots , Z_2^r, \ldots , Z_r^1, \ldots , Z_r^r)$.
After solving this equation for $z$, the outputs $\tilde{z}^\mathrm{t}, \tilde{z}_\mathrm{r}$ can be computed as
\begin{equation*}
        \tilde{z}_\mathrm{r}^i = \tilde{z}_\mathrm{l}^i + \sum_{k = 1}^r b_k \sum_{p = 1}^r f_{kp} \left( Z_p^i - \tilde{z}_\mathrm{l}^i  \right), \qquad
        \tilde{z}^\mathrm{t}_i = \tilde{z}^\mathrm{b}_i + \sum_{k = 1}^r b_k \sum_{p = 1}^r f_{kp} \left( Z_i^p - \tilde{z}^\mathrm{b}_i  \right).
\end{equation*}
The first formula can be simplified as
\[
\tilde{z}_\mathrm{r}^i = \tilde{z}_\mathrm{l}^i + \sum_{k = 1}^r b_k \left( -\mu_k \tilde{z}_\mathrm{l}^i + \sum_{p = 1}^r f_{kp} Z_p^i \right) = (1 - \alpha ) \tilde{z}_\mathrm{l}^i + \sum_{k = 1}^r b_k \sum_{p = 1}^r f_{kp} Z_p^i,
\]
where $\alpha = \bm{b}^\top \bm{\mu} \; (= 1 - (-1)^r)$.
Hence we see
\[
\tilde{z}_\mathrm{r}=(1 - \alpha)\tilde{z}_\mathrm{l} + (\bm{\beta} \otimes I_r \otimes I_d) \bm{z}.
\]
Similarly, we see
\[
\tilde{z}^\mathrm{t}=(1 - \alpha)\tilde{z}^\mathrm{b} + (I_r \otimes \bm{\beta} \otimes I_d) \bm{z}.
\]
Summing up, we find the updating formula in one diamond as
\[
\begin{pmatrix}
    \tilde{z}^\mathrm{t} \\
    \tilde{z}_\mathrm{r}
\end{pmatrix}
=
\begin{pmatrix}
    (I_r \otimes \bm{\beta} \otimes I_d) Q^{-1} D_\mathrm{l} & (1-\alpha) I_{dr} + (I_r \otimes \bm{\beta} \otimes I_d) Q^{-1} D_\mathrm{b} \\
    (1-\alpha) I_{dr} + (\bm{\beta} \otimes I_{dr}) Q^{-1} D_\mathrm{l} & (\bm{\beta} \otimes I_{dr}) Q^{-1} D_\mathrm{b}
\end{pmatrix}
\begin{pmatrix}
    \tilde{z}_\mathrm{l} \\
    \tilde{z}^\mathrm{b}
\end{pmatrix}
=:
\begin{pmatrix}
    C_\mathrm{l}^\mathrm{t} & C_\mathrm{b}^\mathrm{t} \\
    C_\mathrm{l}^\mathrm{r} & C_\mathrm{b}^\mathrm{r}
\end{pmatrix}
\begin{pmatrix}
    \tilde{z}_\mathrm{l} \\
    \tilde{z}^\mathrm{b}
\end{pmatrix}.
\]
The last equality defines the matrices $C$'s.

Now we can write the first-half update matrix $\tilde{M}_1$, which is the matrix that updates the cyan zig-zag line in Fig.~\ref{fig:diamond_mesh_zigzag1} to the orange zig-zag line, and the second-half matrix $\tilde{M}_2$, which updates the orange zig-zag line to the magenta zig-zag line, as
\[
\tilde{M}_1 = \begin{pmatrix}
    C_\mathrm{b}^\mathrm{r} & & & & C_\mathrm{l}^\mathrm{r} \\
    & C_\mathrm{l}^\mathrm{t} & C_\mathrm{b}^\mathrm{t} & &  \\
    & C_\mathrm{l}^\mathrm{r} & C_\mathrm{b}^\mathrm{r} & &  \\
    & & & \ddots & \\
    C_\mathrm{b}^\mathrm{t} & & & & C_\mathrm{l}^\mathrm{t} \\
\end{pmatrix}, \qquad
\tilde{M}_2 = \begin{pmatrix}
    C_\mathrm{l}^\mathrm{t} & C_\mathrm{b}^\mathrm{t} &  & &  \\
    C_\mathrm{l}^\mathrm{r} & C_\mathrm{b}^\mathrm{r} &  & &  \\
    &  & \ddots & \\
    &  & & \ddots & \\
    & & & & C_\mathrm{l}^\mathrm{t} & C_\mathrm{b}^\mathrm{t} \\
    & & & & C_\mathrm{l}^\mathrm{r} & C_\mathrm{b}^\mathrm{r} \\
\end{pmatrix},
\]
respectively. Therefore, the overall update matrix of the high-order diamond scheme is given by
\begin{equation}\label{eq:update_eq_high-order_diamond}
\tilde{M} := \tilde{M}_2 \tilde{M}_1 = \begin{pmatrix}
    C_\mathrm{l}^\mathrm{t} C_\mathrm{b}^\mathrm{r} & C_\mathrm{b}^\mathrm{t} C_\mathrm{l}^\mathrm{t} & {C_\mathrm{b}^\mathrm{t}}^2 & & & C_\mathrm{l}^\mathrm{t} C_\mathrm{l}^\mathrm{r}  \\
    C_\mathrm{l}^\mathrm{r} C_\mathrm{b}^\mathrm{r} & C_\mathrm{b}^\mathrm{r} C_\mathrm{l}^\mathrm{t} & C_\mathrm{b}^\mathrm{r} C_\mathrm{b}^\mathrm{t} & & & {C_\mathrm{l}^\mathrm{r}}^2 \\
    & & \ddots & \\
    & & & \ddots & \\
    {C_\mathrm{b}^\mathrm{t}}^2 & & & C_\mathrm{l}^\mathrm{t} C_\mathrm{l}^\mathrm{r} & C_\mathrm{l}^\mathrm{t} C_\mathrm{b}^\mathrm{r} & C_\mathrm{b}^\mathrm{t} C_\mathrm{l}^\mathrm{t} \\
    C_\mathrm{b}^\mathrm{r} C_\mathrm{b}^\mathrm{t} & & & {C_\mathrm{l}^\mathrm{r}}^2 & C_\mathrm{l}^\mathrm{r} C_\mathrm{b}^\mathrm{r} & C_\mathrm{b}^\mathrm{r} C_\mathrm{l}^\mathrm{t} \\
\end{pmatrix}.
\end{equation}
Denoting
\[
\tilde{C}_0 = \begin{pmatrix}
    C_\mathrm{l}^\mathrm{t} C_\mathrm{b}^\mathrm{r} & C_\mathrm{b}^\mathrm{t} C_\mathrm{l}^\mathrm{t} \\
    C_\mathrm{l}^\mathrm{r} C_\mathrm{b}^\mathrm{r} & C_\mathrm{b}^\mathrm{r} C_\mathrm{l}^\mathrm{t}
\end{pmatrix},
\quad
\tilde{C}^+ = \begin{pmatrix}
    {C_\mathrm{b}^\mathrm{t}}^2 & O \\
    C_\mathrm{b}^\mathrm{r} C_\mathrm{b}^\mathrm{t} & O
\end{pmatrix},
\quad
\tilde{C}^- = \begin{pmatrix}
    O & C_\mathrm{l}^\mathrm{t} C_\mathrm{l}^\mathrm{r} \\
    O & {C_\mathrm{l}^\mathrm{r}}^2
\end{pmatrix},
\]
$\tilde{M}$ can be written as the block circulant matrix where the block diagonal component is $\tilde{C}_0$ and the block sub-diagonal components are $\tilde{C}^+$ and $\tilde{C}^-$. Therefore, $\tilde{M}$ is similar to the block diagonal matrix:
\[
\tilde{\Lambda} = \mathrm{diag} \; (\tilde{\Lambda}_0, \; \tilde{\Lambda}_1, \ldots , \tilde{\Lambda}_{N-1}),
\]
where
\[
\tilde{\Lambda}_k = \tilde{C}_0 + \zeta_N^k \tilde{C}^+ + \zeta_N^{-k} \tilde{C}^-.
\]
The eigenvalues of $\tilde{M}$ can be calculated from $\tilde{\Lambda}_k$, which is a $2dr \times 2dr$ matrix. As in the case of the simple diamond scheme, let $\tilde{\lambda}_1$ denote the dominant eigenvalue of $\tilde{M}$. The scheme is stable if and only if $|\tilde{\lambda}_1| \leq 1$.

\section{Numerical experiments for various multi-symplectic PDEs} \label{sec:numerical}
In this section, we present the stability analysis and numerical experiments for the diamond schemes applied to various multi-symplectic PDEs.

\subsection{PDEs that are stable under $\Delta t = O (\Delta x)$}
For some multi-symplectic PDEs, including the Klein--Gordon equations originally considered in McLachlan and Wilkins~\cite{Diamond}, the diamond schemes are stable under $\Delta t = O(\Delta x)$.

\subsubsection{The Klein--Gordon equation}
McLachlan and Wilkins~\cite{Diamond} presented a dispersion stability analysis for the wave equation.
No stability analysis has been given for more general Klein--Gordon type equations.
Let us consider the linear Klein--Gordon equation $u_{tt} - u_{xx} = u$.

\begin{figure}[htbp]
    \begin{center}
        \includegraphics{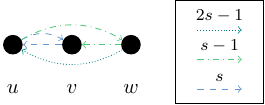}
    \end{center}
    \caption{Error propagation graph for the linear Klein--Gordon equation}
    \label{fig:stability_graph_LKG}
\end{figure}

The linear Klein--Gordon equation can be written in a multi-symplectic form~\eqref{eq:multi-symplectic} with~\eqref{eq:LKG} and $V(u)=u^2/2$.
Fig.~\ref{fig:stability_graph_LKG} is its error propagation graph, which shows the necessary condition $s \ge 2/3$.
More detailed eigenvalue analysis reveals the sufficient condition $s \ge 1$,
i.e.,~$\Delta t = O(\Delta x)$, which can be shown as follows.
The dominant eigenvalues corresponding to the block diagonal component $\Lambda_0$ have absolute values $1 + O(\Delta t)$. However, $\Lambda_0$ corresponds to frequency $0$, that is, the situation where all diamonds have exactly the same error. In practice, this situation never holds, so the dominant eigenvalue in practice corresponds $\Lambda_k$ where $1 \le k \le N-1$, that is equal to $1$ when $\Delta t < \Delta x$. This sufficient condition is also valid for the high-order diamond schemes.

\subsubsection{The nonlinear Dirac equation}
The nonlinear Dirac equation of the following form:
\begin{equation}
    \begin{split}
        \pdv{\psi_1}{t} + \pdv{\psi_2}{x} + \mathrm{i}m\psi_1 + 2\mathrm{i}\lambda (|\psi_2|^2 - |\psi_1|^2) \psi_1 &= 0, \\
        \pdv{\psi_2}{t} + \pdv{\psi_1}{x} - \mathrm{i}m\psi_2 + 2\mathrm{i}\lambda (|\psi_1|^2 - |\psi_2|^2) \psi_2 &= 0,
    \end{split}
\end{equation}
where $\psi_1$ and $\psi_2$ are complex-valued functions and $m$ and $\lambda$ are real constants, can be written in the multi-symplectic form~\cite{HL06} by setting $z = (p_1, q_1, p_2, q_2) = (\Re \psi_1, \Im \psi_1, \Re \psi_2, \Im \psi_2)$,
\begin{equation}
    \begin{split}
        {p_1}_t + {p_2}_x &= \phantom{-{}} mq_1 + 2\lambda (p_2^2 + q_2^2 - p_1^2 - q_1^2)q_1, \\
        {q_1}_t + {q_2}_x &= -mp_1 - 2\lambda (p_2^2 + q_2^2 - p_1^2 - q_1^2)p_1, \\
        {p_2}_t + {p_1}_x &= -mq_2 - 2\lambda (p_2^2 + q_2^2 - p_1^2 - q_1^2)q_2, \\
        {q_2}_t + {q_1}_x &= \phantom{-{}}m p_2 + 2\lambda (p_2^2 + q_2^2 - p_1^2 - q_1^2)p_2 .
    \end{split}
\end{equation}

Fig.~\ref{fig:stability_graph_NLD} shows the error propagation graph for the linearized Dirac equation. The necessary condition for the stability of the diamond schemes is $s > 1/2$. As in the linear Klein--Gordon case, the eigenvalue stability analysis shows a sufficient condition $\Delta t = O(\Delta x)$.
In fact, we numerically verified that the absolute values of all eigenvalues of the update matrix $M$ for the simple diamond scheme do not exceed $1$ whenever $\Delta t < \Delta x$.
We also observed the same for the higher-order diamond scheme.
The condition $\Delta t = O(\Delta x)$ is mild enough, and the diamond scheme is practical.

\begin{figure}[htbp]
    \begin{center}
        \includegraphics{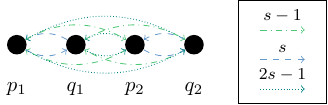}
    \end{center}
    \caption{The error propagation graph for the linear Dirac equation}
    \label{fig:stability_graph_NLD}
\end{figure}

Fig.~\ref{fig:experiment_NLD1_sol1} shows a numerical solution of $p_1$ obtained by the simple diamond scheme, where the initial solution was set to a breather soliton~\cite{HL06} in spatial range $ [-24, 24]$.
Other parameters are set to $\Delta x = 0.3, \; \Delta t = 0.2, \; T = 50$. The numerical solution is stable for a long time span. In addition, as shown in Fig.~\ref{fig:experiment_NLD1_error}, the conservation of the total energy holds (the energy is computed by the rule $\int_a^b E(x) \dd{x} \simeq \sum_{i = 1}^N E(x_i) \Delta x$). Setting $\Delta x = 0.075$ and $\Delta t = 0.05$ also leads to the same stable result, as shown in Fig.~\ref{fig:experiment_NLD2_sol} and Fig.~\ref{fig:experiment_NLD2_error}.

\begin{figure}[htbp]
    \begin{minipage}[t]{.48\linewidth}
        \begin{center}
            \includegraphics[width=75mm]{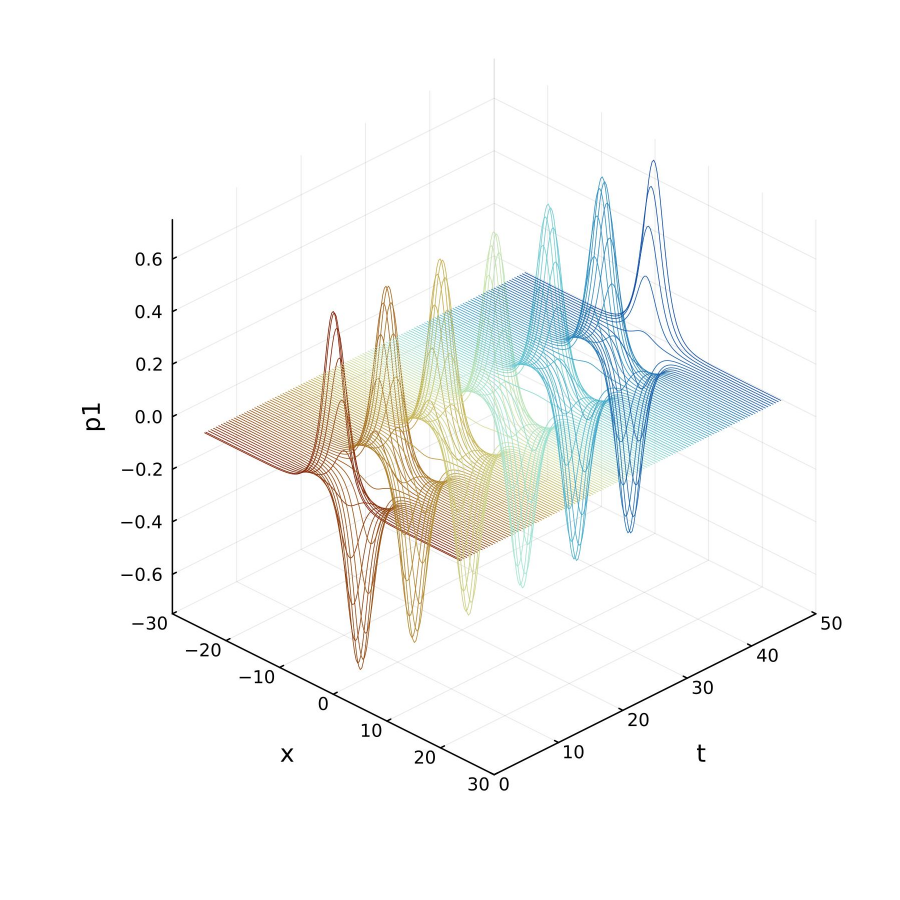}
            \caption{Numerical solutions for the Dirac equation obtained by the simple diamond scheme ($\Delta x = 0.3, \Delta t = 0.2$)}
            \label{fig:experiment_NLD1_sol1}
        \end{center}
    \end{minipage}
    \hfill
    \begin{minipage}[t]{.48\linewidth}
        \begin{center}
            \includegraphics[width=75mm]{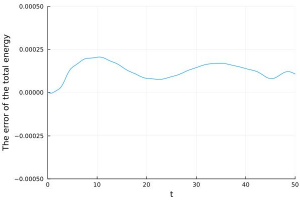}
            \caption{Time evolution of error of the total energy for the Dirac equation ($\Delta x \sim 0.3, \Delta t = 0.2$)}
            \label{fig:experiment_NLD1_error}
        \end{center}
    \end{minipage}
\end{figure}

\begin{figure}[htbp]
    \begin{minipage}[t]{.48\linewidth}
        \begin{center}
            \includegraphics[width=75mm]{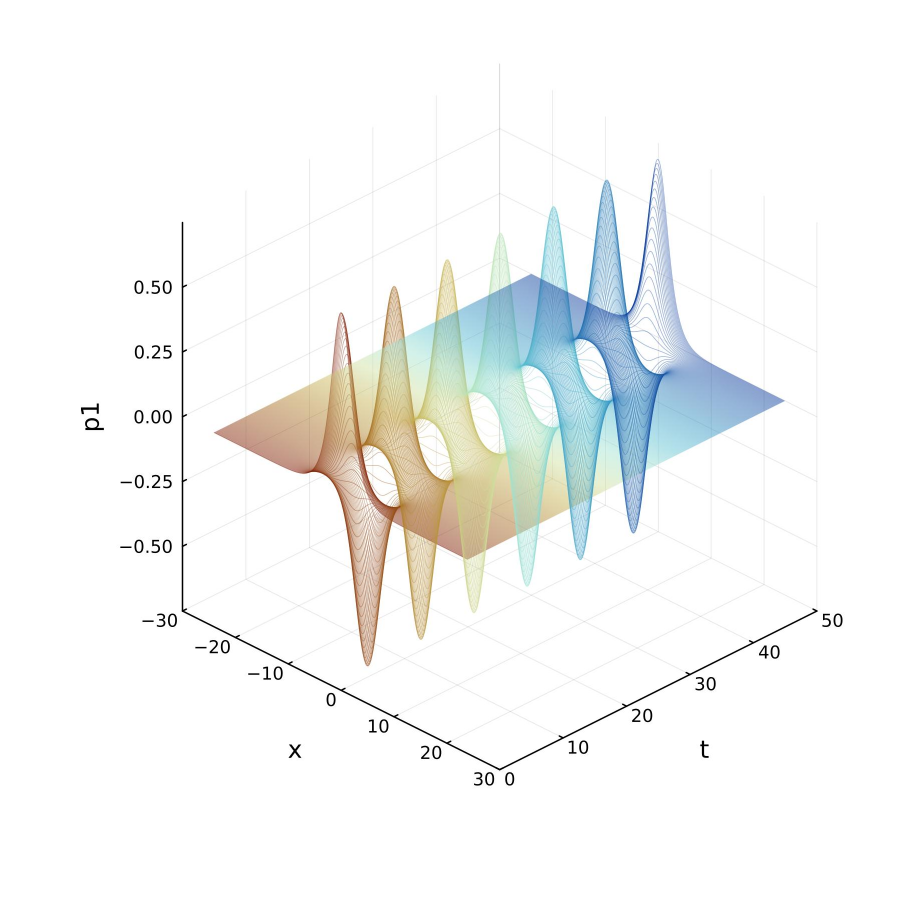}
            \caption{Numerical solutions for the Dirac equation obtained by the simple diamond scheme ($\Delta x = 0.075, \Delta t = 0.05$)}
            \label{fig:experiment_NLD2_sol}
        \end{center}
    \end{minipage}
    \hfill
    \begin{minipage}[t]{.48\linewidth}
        \begin{center}
            \includegraphics[width=75mm]{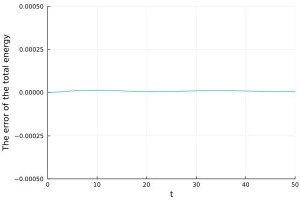}
            \caption{Time evolution of error of the total energy for the Dirac equation ($\Delta x = 0.075, \Delta t = 0.05$)}
            \label{fig:experiment_NLD2_error}
        \end{center}
    \end{minipage}
\end{figure}

\begin{figure}[htbp]
    \begin{minipage}[t]{.48\linewidth}
        \begin{center}
            \includegraphics[width=75mm]{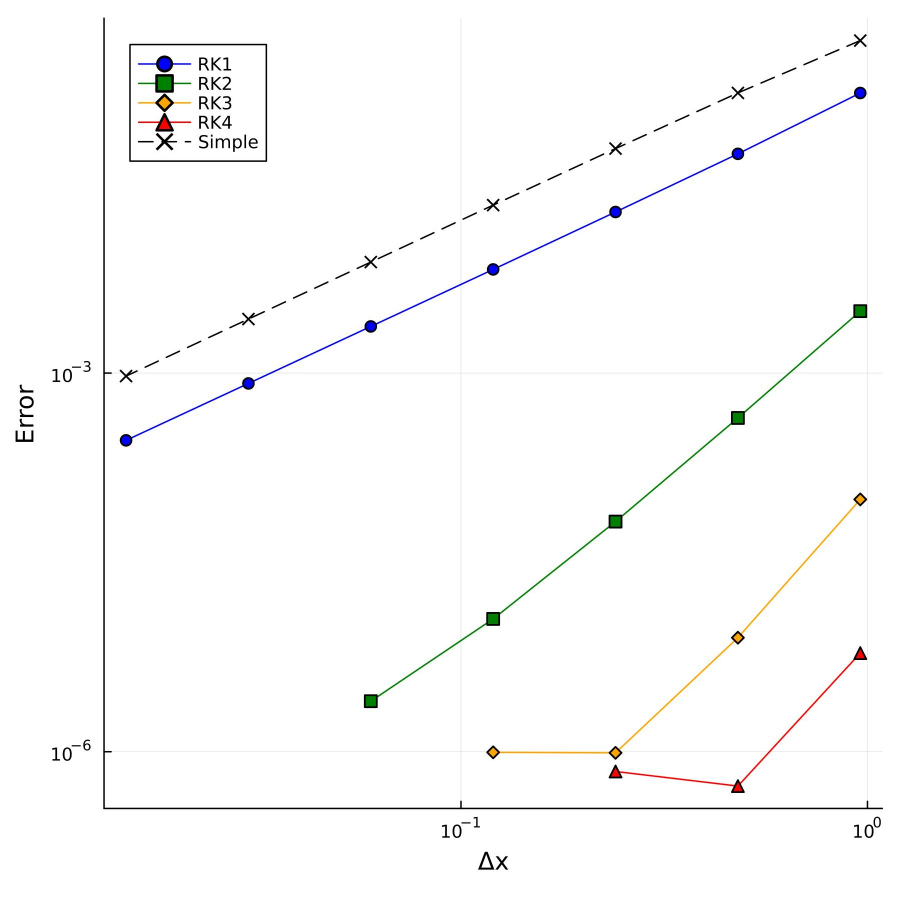}
            \caption{Global errors of numerical solutions for the Dirac equation obtained by the simple and high-order diamond schemes ($\kappa = \Delta t / \Delta x$ was set to $2/3$)}
            \label{fig:experiment_NLD_RK_error}
        \end{center}
    \end{minipage}
    \hfill
    \begin{minipage}[t]{.48\linewidth}
        \begin{center}
            \includegraphics[width=75mm]{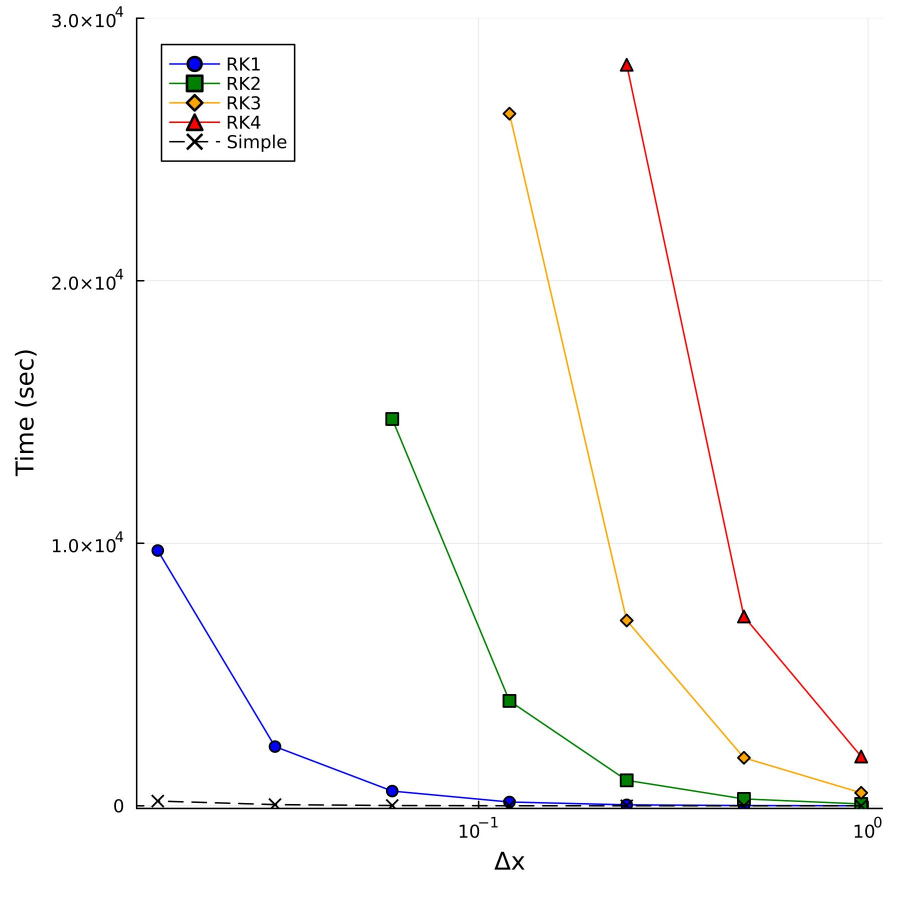}
            \caption{Computational times of the simple and high-order diamond schemes for the Dirac equation ($\kappa = \Delta t / \Delta x$ was set to $2/3$)}
            \label{fig:experiment_NLD_RK_time}
        \end{center}
    \end{minipage}
\end{figure}

\newpage

\subsection{PDEs that are stable under $\Delta t = O((\Delta x)^3)$}

\subsubsection{The ``good'' Boussinesq equation}
The ``good'' Boussinesq equation (or simply the Boussinesq equation) $u_{tt} - u_{xx} = -u_{xxxx} + (u^2)_{xx}$ can be written in the multi-symplectic form~\cite{Chen05} by setting $z = (u \; v \; p \; q)$,
\begin{equation*}
    \begin{split}
        -v_t - p_x &= -u - 2u^2, \\
        u_t - q_x &= 0,\\
        u_x &= p,\\
        v_x &= q.
    \end{split}
\end{equation*}

\begin{figure}[htbp]
    \begin{center}
        \includegraphics{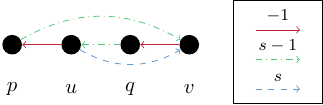}
    \end{center}
    \caption{The error propagation graph for the linearized ``good'' Boussinesq equation}
    \label{fig:stability_graph_Boussinesq}
\end{figure}

Fig.~\ref{fig:stability_graph_Boussinesq} illustrates the error propagation graph for the Boussinesq equation. From the graph analysis, the necessary condition for stability is $\Delta t = O(\Delta x^2)$, since the least-weight cycle in the graph has weight $2s - 4$. However, this condition is not sufficient. In fact, the eigenvalue analysis shows that $\Delta t = O(\Delta x^3)$ is sufficient for ensuring the stability of the diamond schemes as follows.
We neglect the nonlinear term $u^2$ and compute the maximum eigenvalue $\lambda_1$ of the update matrix $M$ for the diamond schemes numerically.
Fig.~\ref{fig:stable_dt_vs_dx_Boussinesq_simple} presents the relationship between $\Delta t$ and $\Delta x$ such that $|\lambda_1| \le 1$ in the case of the simple diamond scheme. Fig.~\ref{fig:stable_dt_vs_dx_Boussinesq_RK} shows the result for the higher-order diamond schemes.
From these numerical simulations, we observe the following: (1) we need $\Delta t \le c \Delta x^3$; (2) the constant $c$ depends almost linearly on the domain length $b - a$ and on $r^2$, where $r$ denotes the number of stages of the Runge--Kutta method.
When $\Delta x$ is small or $b - a$ is large, satisfying this stability condition becomes infeasible without employing multi-precision computations, rendering the scheme impractical.

Fig.~\ref{fig:experiment_Boussinesq1_sol} shows a numerical solution obtained by the simple diamond scheme on the spatial domain $[-50, 50]$ and the parameters $\Delta x = 0.1, \; \Delta t = 10^{-6}$, truncated into a range $[-5, 5]$. The initial condition is a one-soliton. This setting satisfies the numerically derived stability condition, and the multi-symplectic conservation of the total energy holds (Fig.~\ref{fig:experiment_Boussinesq1_error}).

\begin{figure}[htbp]
    \begin{minipage}[t]{.48\linewidth}
        \begin{center}
            \includegraphics[width=75mm]{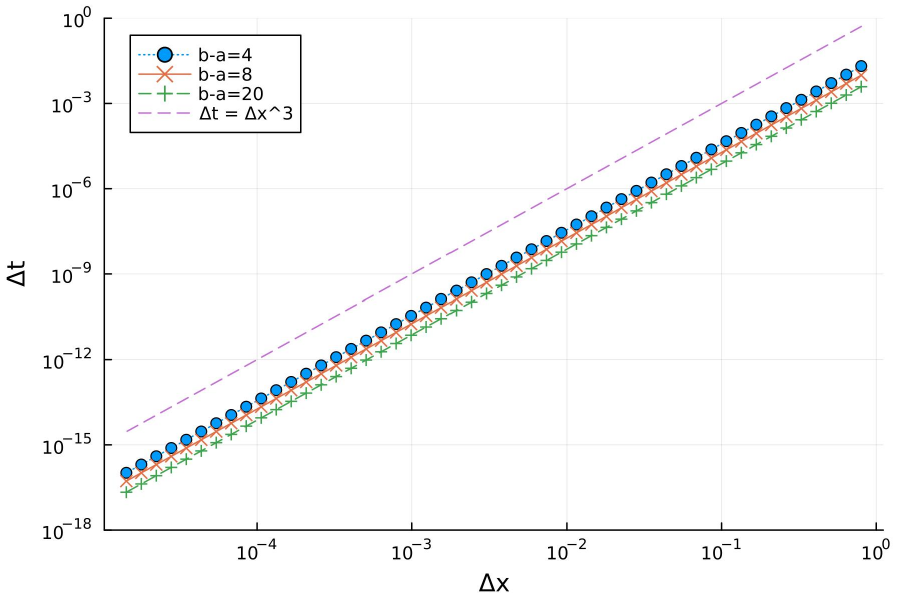}
            \caption{$\Delta t$ vs $\Delta x$ for the stability of the simple diamond scheme of the Boussinesq equation, $b - a$ denotes the length of the spatial range.}
            \label{fig:stable_dt_vs_dx_Boussinesq_simple}
        \end{center}
    \end{minipage}
    \hfill
    \begin{minipage}[t]{.48\linewidth}
        \begin{center}
            \includegraphics[width=75mm]{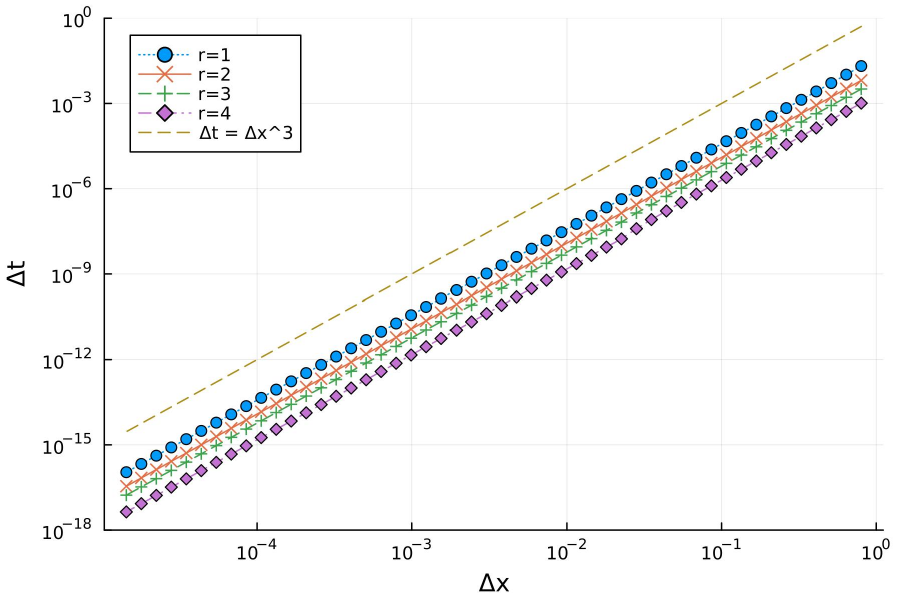}
            \caption{$\Delta t$ vs $\Delta x$ for the stability of the high-order diamond schemes of the Boussinesq equation, $r$ denotes the stage of the Runge--Kutta method and $b - a$ is fixed to $4$.}
            \label{fig:stable_dt_vs_dx_Boussinesq_RK}
        \end{center}
    \end{minipage}
\end{figure}

\begin{figure}[htbp]
    \begin{minipage}[t]{.48\linewidth}
        \begin{center}
            \includegraphics[width=75mm]{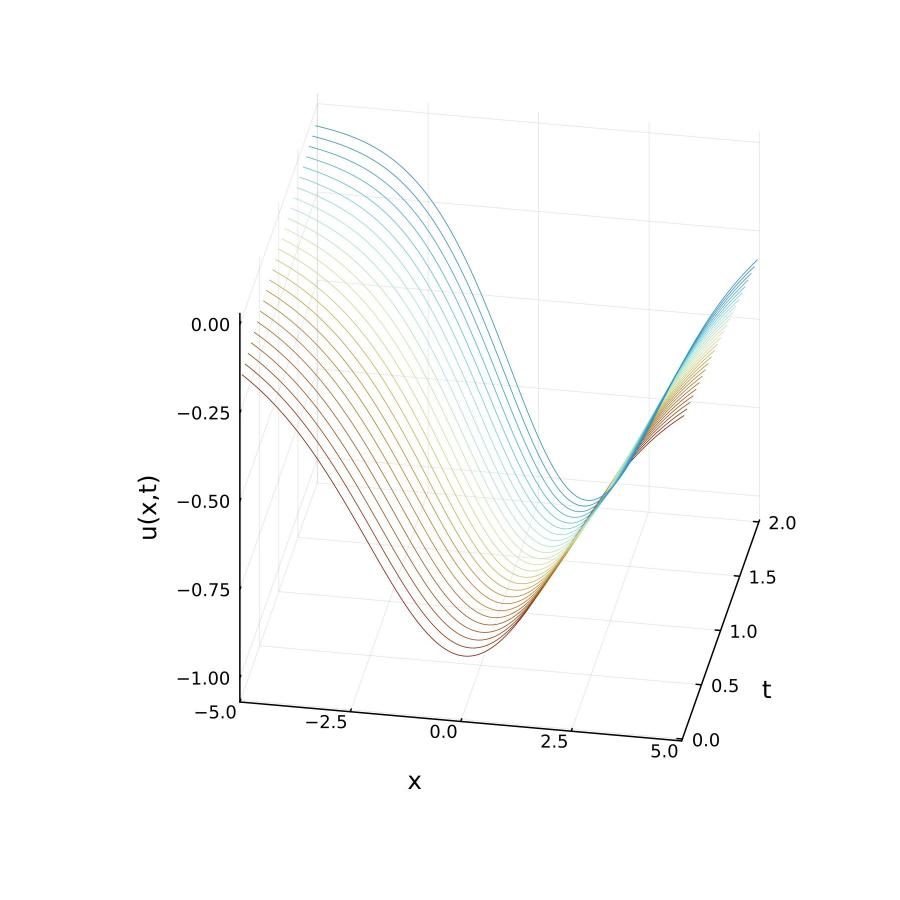}
            \caption{Numerical solutions for the Boussinesq equation obtained by the simple diamond scheme ($\Delta x = 0.1, \Delta t = 10^{-6}$), truncated into a range $[-5, 5]$}
            \label{fig:experiment_Boussinesq1_sol}
        \end{center}
    \end{minipage}
    \hfill
    \begin{minipage}[t]{.48\linewidth}
        \begin{center}
            \includegraphics[width=75mm]{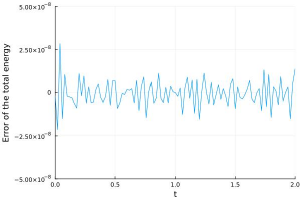}
            \caption{Time evolution of error of the total energy for the Boussinesq equation ($\Delta x = 0.1, \Delta t = 10^{-6}$)}
            \label{fig:experiment_Boussinesq1_error}
        \end{center}
    \end{minipage}
\end{figure}

\subsubsection{The nonlinear Schr\"{o}dinger equation}
The nonlinear Schr\"{o}dinger equation $i \phi_t + \phi_{xx} + a|\phi|^2 \phi = 0$ where $\phi = p + iq$ is complex-valued function can be written in the multi-symplectic form~\cite{CZT02} by setting $z = (p \; q \; v \; w)$,
\begin{equation*}
    \begin{split}
        q_t - v_x &= a p (p^2 + q^2), \\
       -p_t - w_x &= a q (p^2 + q^2), \\
       p_x &= v, \\
       q_x &= w.
    \end{split}
\end{equation*}

Linearizing the equations by assuming $p^2 + q^2 = \mathrm{const.}$, the graph stability analysis and the eigenvalue analysis can be carried out. Fig.~\ref{fig:stability_graph_Sch} shows the error propagation graph of the linear Schr\"{o}dinger equation.

\begin{figure}[htbp]
    \begin{center}
        \includegraphics{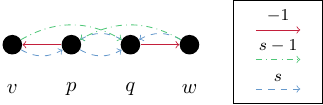}
    \end{center}
    \caption{The error propagation graph for the linear Schr\"{o}dinger equation}
    \label{fig:stability_graph_Sch}
\end{figure}

If $s > 2$, there is no negative cycle in the graph, and thus the necessary condition for the stability of the diamond schemes is $\Delta t = O(\Delta x^2)$. However, eigenvalue analysis reveals that the sufficient condition is $\Delta t = O(\Delta x^3)$. Strictly speaking, for the nonlinear Schr\"{o}dinger equation, the dominant eigenvalue $\lambda_1$ always has an absolute value greater than $1$. Nevertheless, $|\lambda_1| = 1 + O(\Delta t^2)$ holds if and only if $\Delta t = O(\Delta x^3)$. In this case, one has $\lambda_1^{T / \Delta t} \le \exp(T \Delta t) \simeq 1 + T \Delta t$, and hence the schemes can be regarded as stable for sufficiently small time spans $T$.

Fig.~\ref{fig:stable_dt_vs_dx_Sch_simple} and Fig.~\ref{fig:stable_dt_vs_dx_Sch_RK} illustrate the relationship between $\Delta t$ and $\Delta x$ such that $\lambda_1^{1/\Delta t} \le 1.1$ holds for the simple diamond scheme and the higher-order diamond schemes, respectively. Similar to the case of the Boussinesq equation, $\Delta t = O(\Delta x^3)$ is sufficient for stability, and the constant depends on the domain length $b - a$ and on $r^2$.

Fig.~\ref{fig:experiment_Sch1_sol} shows a numerical solution obtained by the simple diamond scheme on the spatial domain $[-24, 24]$, with the parameters $\Delta x = 0.1, \; \Delta t = 2.5 \times 10^{-6}$ and an initial condition of a 2-soliton:
\[
\phi(x, 0) = 3 \sech(3(x + 10)) \exp(3x/4) + \sqrt{6} \sech(\sqrt{6}(x - 10)) \exp(-3x/4).
\]
This setting satisfies the numerically derived stability condition, and the multi-symplectic conservation of the total energy is preserved (see Fig.~\ref{fig:experiment_Sch1_error}).

Fig.~\ref{fig:experiment_Sch1_sol_collapse} presents the results of a numerical experiment with the simple diamond scheme, performed under similar initial conditions but with a large $\Delta t$ that violates the stability condition shown in Fig.~\ref{fig:stable_dt_vs_dx_Sch_simple}. As predicted by the eigenvalue-based stability analysis, the numerical solution diverges.

\begin{figure}[htbp]
    \begin{minipage}[t]{.48\linewidth}
        \begin{center}
            \includegraphics[width=75mm]{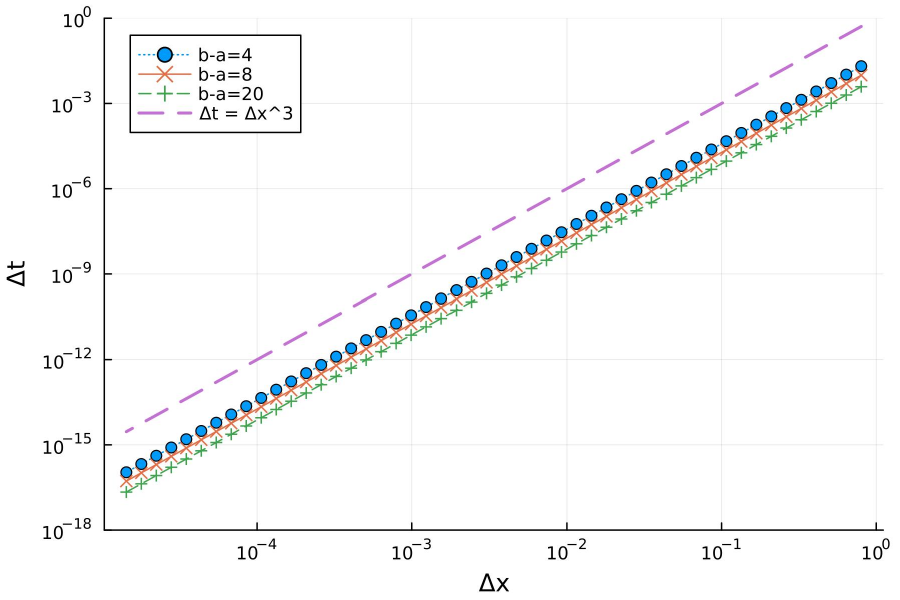}
            \caption{$\Delta t$ vs $\Delta x$ for the stability of the simple diamond scheme of the Schr\"{o}dinger equation, $b-a$ denotes the length of the range $[a, b]$.}
            \label{fig:stable_dt_vs_dx_Sch_simple}
        \end{center}
    \end{minipage}
    \hfill
    \begin{minipage}[t]{.48\linewidth}
        \begin{center}
            \includegraphics[width=75mm]{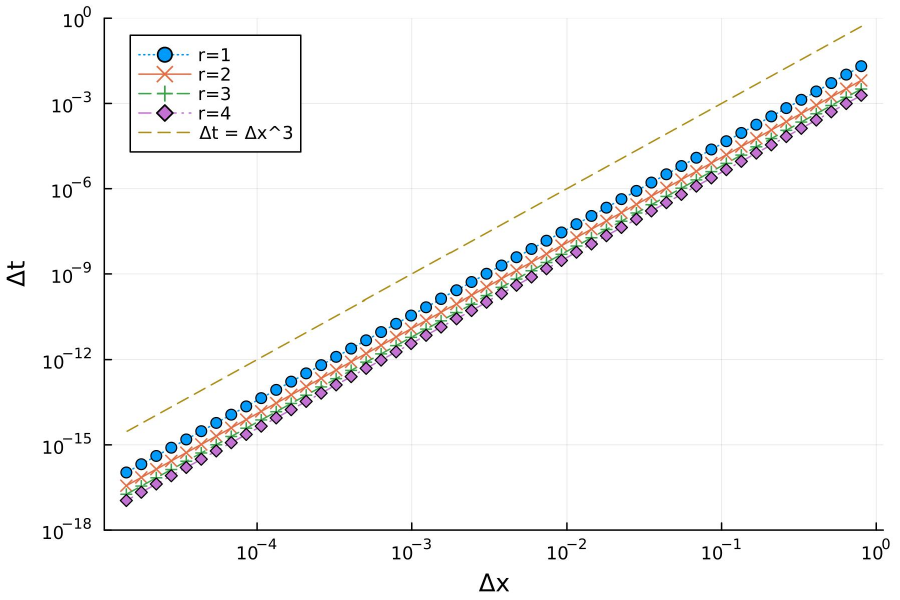}
            \caption{$\Delta t$ vs $\Delta x$ for the stability of the high-order diamond schemes of the Schr\"{o}dinger equation, $r$ denotes the stage of the Runge--Kutta method and $b-a$ is fixed to $4$.}
            \label{fig:stable_dt_vs_dx_Sch_RK}
        \end{center}
    \end{minipage}
\end{figure}

\begin{figure}[htbp]
    \begin{minipage}[t]{.48\linewidth}
        \begin{center}
            \includegraphics[width=75mm]{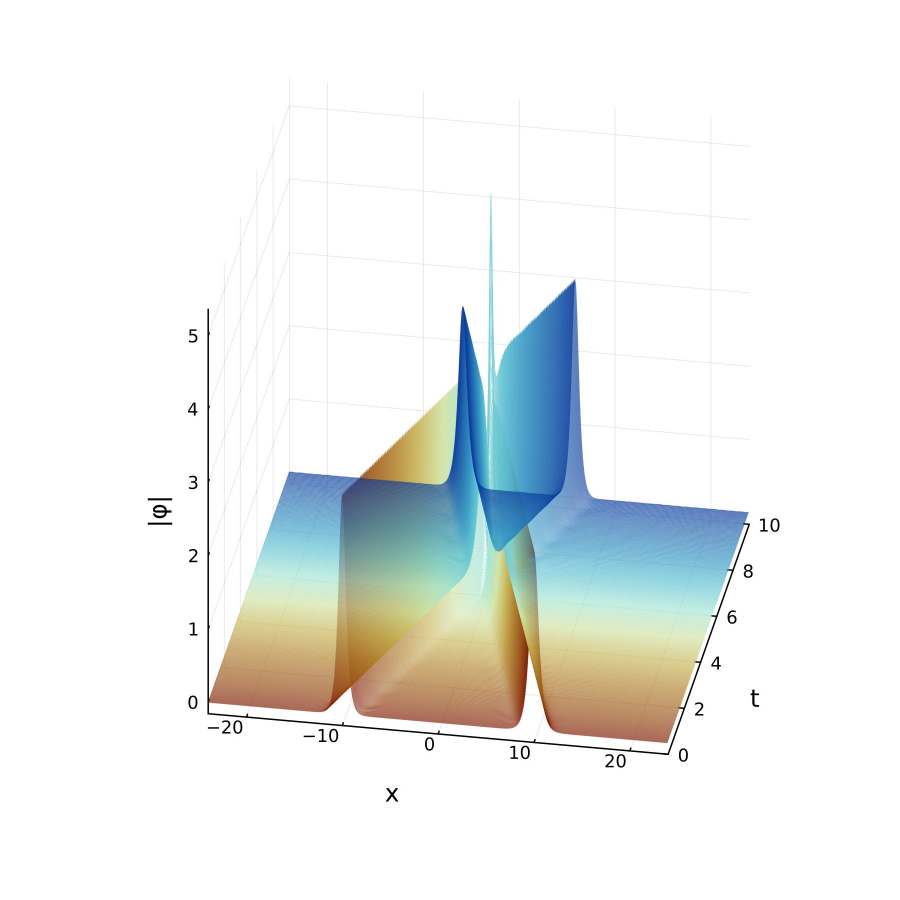}
            \caption{Numerical solutions for the nonlinear Schr\"{o}dinger equation obtained by the simple diamond scheme ($\Delta x = 0.1, \Delta t = 2.5 \times 10^{-6}$)}
            \label{fig:experiment_Sch1_sol}
        \end{center}
    \end{minipage}
    \hfill
    \begin{minipage}[t]{.48\linewidth}
        \begin{center}
            \includegraphics[width=75mm]{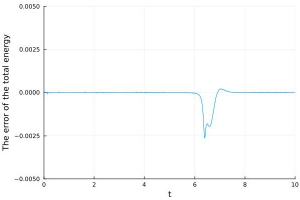}
            \caption{Time evolution of error of the total energy for the nonlinear Schr\"{o}dinger equation ($\Delta x = 0.1, \Delta t = 2.5 \times 10^{-6}$)}
            \label{fig:experiment_Sch1_error}
        \end{center}
    \end{minipage}
\end{figure}

\begin{figure}[htbp]
        \begin{center}
            \includegraphics[width=70mm]{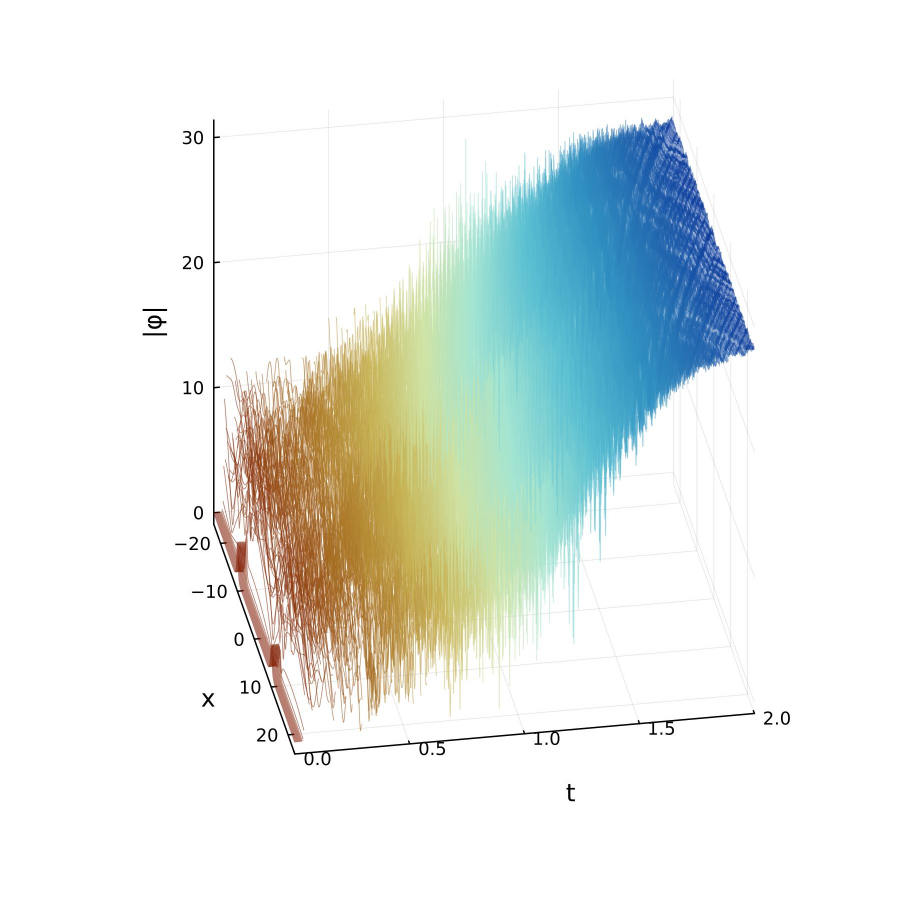}
            \caption{Unstable numerical solutions for the nonlinear Schr\"{o}dinger equation obtained by the simple diamond scheme ($\Delta x = 0.1, \Delta t = 3.33 \times 10^{-6}$)}
            \label{fig:experiment_Sch1_sol_collapse}
        \end{center}
\end{figure}

\section{Conclusion} \label{sec:conclusion}
In this paper, we have proposed a new method to analyze the stability of diamond schemes for multi-symplectic PDEs. Because diamond schemes possess structural features distinct from standard numerical schemes, it is necessary to develop a tailored stability analysis. The proposed method consists of three steps: (1) verifying consistency, (2) checking whether the necessary condition derived from graph stability analysis holds, and (3) deriving the sufficient condition. By applying this method, we classified standard multi-symplectic PDEs according to the stability of diamond schemes. As a result, we found that diamond schemes are stable for the nonlinear Dirac equation under the reasonable condition $\Delta t = O(\Delta x)$, and for the Boussinesq and nonlinear Schr\"{o}dinger equations under the more restrictive condition $\Delta t = O(\Delta x^3)$. On the other hand, our analysis also revealed that diamond schemes are inconsistent and may not be locally and uniquely solvable for certain PDEs, such as the KdV equation, and unconditionally unstable for others, such as the mixed-derivative type Klein--Gordon equation. A summary of the stability analysis results is provided in Table~\ref{tb:stabilities_for_MS_PDEs}.

A possible area of future work is to investigate the practicality of diamond-mesh numerical methods in broader contexts, not only for multi-symplectic PDEs. The advantage of diamond-meshes in computational complexity does not depend on the multi-symplecticness---it suffices that the target PDEs be in first-order PDE systems. The proposed stability analysis method is also useful in such challenges.

\appendix

\section{Further examples of investigating structural consistency for various PDEs} \label{app:examples}
There are other multi-symplectic PDEs in which the simple diamond scheme becomes structurally inconsistent, including:
\begin{itemize}
    \item The Korteweg--de Vries (KdV) equation $u_t + 6uu_x + u_{xxx} = 0$ in the multi-symplectic form given in \cite{Leimkuhler},
    \item The Camassa--Holm equation $u_t - u_{xxt} + 3uu_x = 2u_x u_{xx} + uu_{xxx}$ in the multi-symplectic form given in \cite{YZW16},
    \item The BBM equation $u_t + uu_x + \sigma u_{xxt} = 0$ in the multi-symplectic form given in \cite{LS13}.
    \item Another multi-symplectic formulation of the Hunter--Saxton equation $u_{xxt} + 2u_xu_{xx} + uu_{xxx} = 0$ given in \cite{MCFM17}.
\end{itemize}

In this appendix, we present several examples of multi-symplectic PDEs for which the simple diamond scheme becomes structurally inconsistent.

\subsection{KdV equation}
The Korteweg--de Vries (KdV) equation $u_t + u_{xxx} + uu_x = 0$ can be written in the form of Eq.~\eqref{eq:multi-symplectic} (cf.~\cite{Leimkuhler}) by setting $z = (\psi, u, w, p)$.
\setcounter{equation}{0}
\renewcommand{\theequation}{K\arabic{equation}}
\begin{align}
    \boxed{u_t} + p_x &= 0, \label{eq:KdV1} \\
    -\boxed{\psi_t} - 2w_x&= -\boxed{p} + \boxed{u^2}, \label{eq:KdV2} \\
    2u_x &= 2\boxed{w} , \label{eq:KdV3} \\
    -\psi_x&= -\boxed{u} \label{eq:KdV4}.
\end{align}
\noeqref{eq:KdV1,eq:KdV2,eq:KdV3,eq:KdV4}

The corresponding bipartite graph and its DM decomposition are shown in Figure~\ref{fig:KdV_DM}.
\begin{figure}[htbp]
\centering
\includegraphics{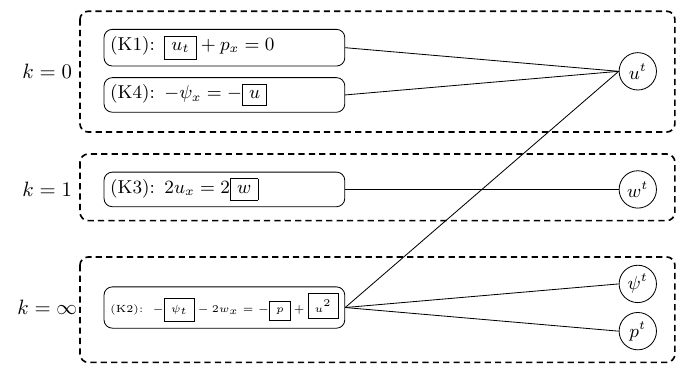}
\caption{Dulmage--Mendelsohn decomposition of the bipartite graph for the KdV equation: equations vs. unknowns \((u^{t},\psi^{t},w^{t},p^{t})\).}\label{fig:KdV_DM}
\end{figure}

In this case, $u^\mathrm{t}$ is overdetermined by Eq.~\eqref{eq:KdV1} and Eq.~\eqref{eq:KdV4}, $\psi^\mathrm{t}$ and $p^\mathrm{t}$ are underdetermined since there is no equation to determine them, and we regard the diamond scheme as structurally inconsistent.

\subsection{Camassa--Holm equation}
The Camassa--Holm equation $u_t - u_{txx} + 3u u_x = 2u_x u_{xx} + uu_{xxx}$ can be written in the multi-symplectic form~\cite{YZW16} by setting $z = (u \; \phi \; w \; \psi \; v)$.

\setcounter{equation}{0}
\renewcommand{\theequation}{CH\arabic{equation}}
\begin{align}
    \frac{1}{2} \boxed{\phi_t} - \frac{1}{2} \boxed{v_t} - \psi_x &= \frac{1}{2} \boxed{w} - \frac{1}{2} \boxed{v},
    \label{eq:CH1} \\
    -\frac{1}{2} \boxed{u_t} - \frac{1}{2} w_x&= 0, \label{eq:CH2}\\
    \frac{1}{2}\psi_x &= \frac{1}{2}\boxed{u} , \label{eq:CH3} \\
    u_x&= \boxed{v} , \label{eq:CH4} \\
    \boxed{u_t}&= -\boxed{uv} + \boxed{\psi}. \label{eq:CH5}
\end{align}
\noeqref{eq:CH1,eq:CH2,eq:CH3,eq:CH4}

The corresponding bipartite graph and its DM decomposition are shown in Figure~\ref{fig:CH_DM}.
\begin{figure}[htbp]
\centering
\includegraphics{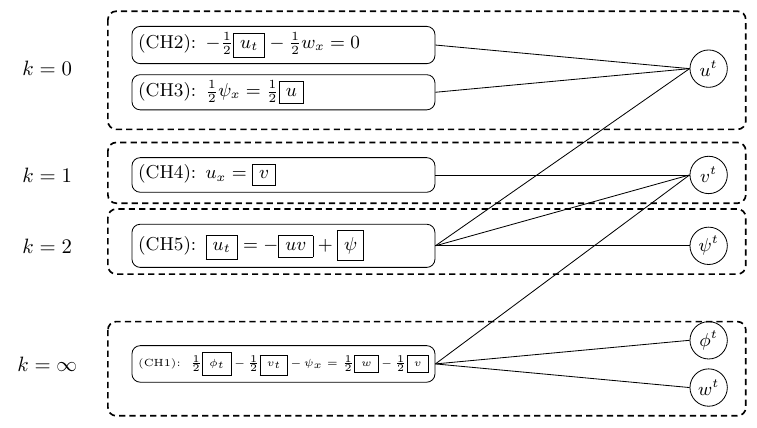}
\caption{Dulmage--Mendelsohn decomposition of the bipartite graph for the Camassa--Holm equation: equations vs. unknowns \((u^{t},\phi^{t},w^{t},\psi^{t},v^{t})\).}\label{fig:CH_DM}
\end{figure}

In this case, first, $u^\mathrm{t}$ is overdetermined by Eq.~\eqref{eq:CH2} and Eq.~\eqref{eq:CH3}. Second, $v^\mathrm{t}$ is well-determined by Eq.~\eqref{eq:CH4}. Third, $\psi^\mathrm{t}$ is well-determined by Eq.~\eqref{eq:CH5}. Finally, $\phi^\mathrm{t}$ and $w^\mathrm{t}$ are underdetermined since only Eq.~\eqref{eq:CH1} involves them.

\subsection{BBM equation}
The BBM equation $u_t + u u_x + \sigma u_{xxt} = 0$ can be written in the multi-symplectic form~\cite{LS13} by setting $z = (\phi \; u \; v \; w \; p)$.

\setcounter{equation}{0}
\renewcommand{\theequation}{B\arabic{equation}}
\begin{align}
    -\frac{1}{2} \boxed{u_t} - p_x &= 0,
    \label{eq:BBM1} \\
    \frac{1}{2} \boxed{\phi_t} - \frac{1}{2} \sigma \boxed{v_t} - \frac{1}{2} \sigma w_x&= \boxed{p} - \frac{1}{2}\boxed{u}^2, \label{eq:BBM2}\\
    \frac{1}{2}\sigma \boxed{u_t} &= \frac{1}{2}\sigma \boxed{w} \label{eq:BBM3}, \\
    \frac{1}{2}\sigma u_x&= \frac{1}{2}\sigma \boxed{v} , \label{eq:BBM4} \\
    \phi_x &= \boxed{u}. \label{eq:BBM5}
\end{align}
\noeqref{eq:BBM1,eq:BBM2,eq:BBM3,eq:BBM4,eq:BBM5}

The corresponding bipartite graph and its DM decomposition are shown in Figure~\ref{fig:BBM_DM}.
\begin{figure}[htbp]
\centering
\includegraphics{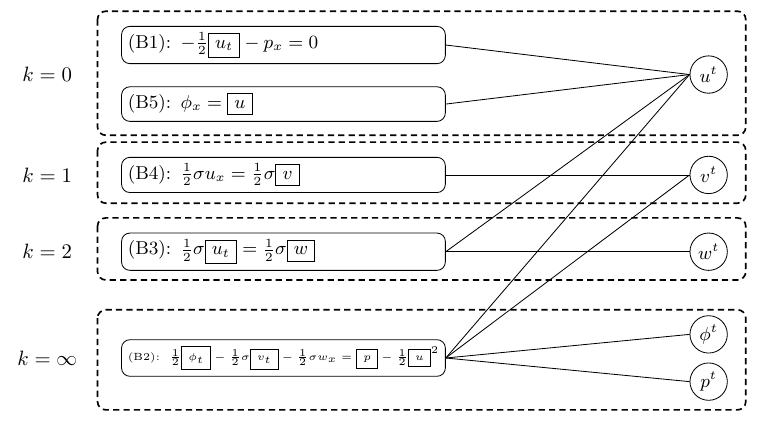}
\caption{Dulmage--Mendelsohn decomposition of the bipartite graph for the BBM equation: equations vs. unknowns \((\phi^{t},u^{t},v^{t},w^{t},p^{t})\).}\label{fig:BBM_DM}
\end{figure}

In this case, $u^\mathrm{t}$ is overdetermined by Eq.~\eqref{eq:BBM1} and Eq.~\eqref{eq:BBM5}.

\subsection{Hunter--Saxton equation}
The Hunter--Saxton equation $u_{xxt} + 2u_x u_{xx} + u u_{xxx} = 0$ can be written in two different multi-symplectic forms~\cite{MCFM17}, which result in either structurally inconsistent diamond schemes. The first multi-symplectic form have been already discussed.

The second multi-symplectic form of the Hunter--Saxton equation is obtained by setting $z = (u \; \beta \; w \; \alpha \; \phi \; \gamma \; P \; r)$.

\setcounter{equation}{0}
\renewcommand{\theequation}{HS'\arabic{equation}}
\begin{align}
    -\frac{1}{2} \boxed{\beta_t} &= -\boxed{\gamma} - \boxed{u\alpha}, \label{eq:HS2_1} \\
    \beta_x &= \boxed{\alpha}, \label{eq:HS2_2}\\
    \frac{1}{2}\boxed{\alpha_t} + \gamma_x &= 0 \label{eq:HS2_3}, \\
    -\beta_x - 2r_x &= 0, \label{eq:HS2_4} \\
    \frac{1}{2} \boxed{u_t} + w_x + P_x &= 0, \label{eq:HS2_5} \\
    -\frac{1}{2} \boxed{\phi_t} &= - \boxed{w} + \frac{1}{2}\boxed{u}^2, \label{eq:HS2_6}\\
    -\phi_x &= -\boxed{u}, \label{eq:HS2_7} \\
    2P_x &= 2\boxed{r}. \label{eq:HS2_8}
\end{align}
\noeqref{eq:HS2_1,eq:HS2_2,eq:HS2_3,eq:HS2_4,eq:HS2_5,eq:HS2_6,eq:HS2_7,eq:HS2_8}

The corresponding bipartite graph and its DM decomposition are shown in Figure~\ref{fig:HS2_DM}.
\begin{figure}[htbp]
\centering
\includegraphics{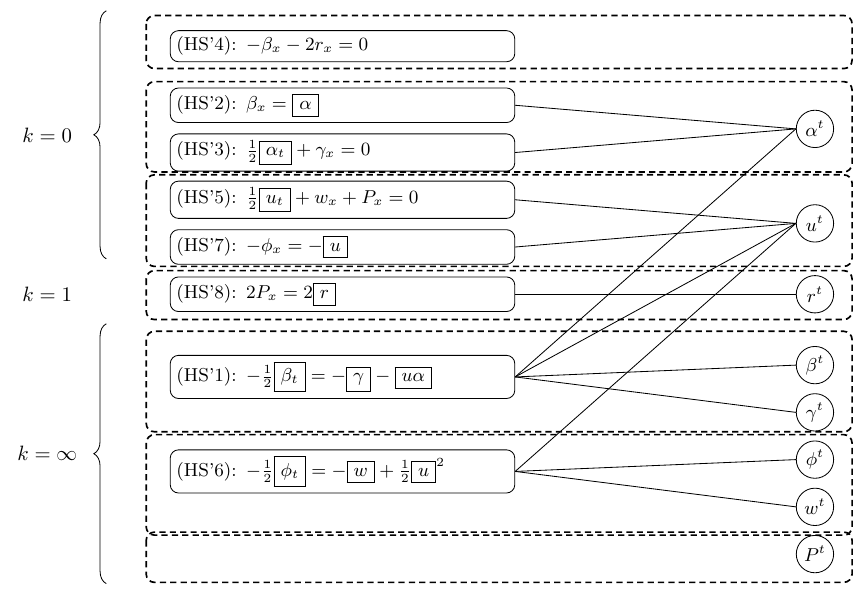}
\caption{Dulmage--Mendelsohn decomposition of the bipartite graph for the second multi-symplectic form of the Hunter--Saxton equation: equations vs. unknowns \((u^{t},\beta^{t},w^{t},\alpha^{t},\phi^{t},\gamma^{t},P^{t},r^{t})\).}\label{fig:HS2_DM}
\end{figure}

In this case, first, Eq.~\eqref{eq:HS2_4} is an isolated equation without unknowns. Second, $\alpha^\mathrm{t}$ is overdetermined by Eq.~\eqref{eq:HS2_2} and Eq.~\eqref{eq:HS2_3}. Third, $u^\mathrm{t}$ is overdetermined by Eq.~\eqref{eq:HS2_5} and Eq.~\eqref{eq:HS2_7}. Fourth, $r^\mathrm{t}$ is well-determined by Eq.~\eqref{eq:HS2_8}. Fifth, $\beta^\mathrm{t}$ and $\gamma^\mathrm{t}$ are underdetermined since only Eq.~\eqref{eq:HS2_1} involves them. Finally, $\phi^\mathrm{t}$ and $w^\mathrm{t}$ are underdetermined since only Eq.~\eqref{eq:HS2_6} involves them. Then, $P^\mathrm{t}$ is an isolated variable without equations.

\section{Inconsistency of the high-order diamond schemes} \label{app:high-order}
We show that the high-order diamond schemes inherit the inconsistency of the simple diamond scheme.
Thus, they are inadequate when the simple diamond scheme has a structural inconsistency.

\begin{thm}
    For the linear multi-symplectic PDEs in which the coefficient matrix of the simple diamond scheme is singular, the high-order diamond schemes using the Runge--Kutta collocation methods also have singular coefficient matrices for updating each diamond.
\end{thm}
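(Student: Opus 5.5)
The plan is to reduce the singularity of the collocation matrix $Q$ in \eqref{eq:high-order:system} to the singularity of the simple-diamond matrix, by block-triangularizing $Q$ through the eigenstructure of $F=A^{-1}$. First I fix the meaning of the hypothesis. As in Proposition~\ref{prop:DM_implies_singular}, the singularity of the simple-diamond coefficient matrix arises structurally, so it holds for every $\Delta t$. Equivalently, the polynomial $\tau\mapsto\det(\tau K-P/4)$ vanishes identically. Since it is a polynomial, it also vanishes for every complex $\tau$, so the pencil $P-cK$ is singular for all $c\in\mathbb{C}$.

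Next I rewrite $Q$ using $\widetilde K=K/\Delta t-L/\Delta x$ and $\widetilde L=K/\Delta t+L/\Delta x$:
\[
Q=I_{r}\otimes I_r\otimes P-I_r\otimes F\otimes\widetilde K-F\otimes I_r\otimes\widetilde L .
\]
Take a Schur decomposition $F=UTU^{*}$ with $T$ upper triangular and diagonal entries $\lambda_1,\dots,\lambda_r$, the eigenvalues of $F$. These are nonzero, since $F$ is invertible, and possibly complex. Conjugating $Q$ by $U\otimes U\otimes I_d$ gives
\[
I\otimes I\otimes P-I\otimes T\otimes\widetilde K-T\otimes I\otimes\widetilde L .
\]
With the lexicographic index ordering, this matrix is block upper triangular with $d\times d$ blocks. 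Its diagonal blocks are
\[
P-\lambda_j\widetilde K-\lambda_i\widetilde L=P-(\lambda_i+\lambda_j)\frac{K}{\Delta t}-(\lambda_i-\lambda_j)\frac{L}{\Delta x}.
\]
The determinant of $Q$ is therefore the product of the determinants of these blocks.

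Then I look at the diagonal index pairs $i=j$. There the $L$ term cancels and the block becomes $P-2\lambda_iK/\Delta t$. With $c=2\lambda_i/\Delta t$, this is singular by the first step. Hence one factor of $\det Q$ vanishes, so $Q$ is singular for every $\Delta t$ and $\Delta x$. Consequently the local update of the high-order diamond scheme is not uniquely solvable, and this holds regardless of the stage number $r$.

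The main obstacle is the first step. If the hypothesis only asserted singularity at one particular $\Delta t$, the block $P-2\lambda_iK/\Delta t$ would be evaluated at a different (generally complex) parameter, and the argument would fail. I would therefore state explicitly that the relevant singularity is the structural, $\Delta t$-independent one coming from Step~1. I would also justify the use of complex Schur vectors: the determinant identity is unaffected by unitary similarity, which avoids any assumption that $F$ is diagonalizable.
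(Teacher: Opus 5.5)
Your proof is correct and uses the same key idea as the paper. Pair an eigenvalue $\lambda$ of $F$ with itself, so that $\widetilde K+\widetilde L=2K/\Delta t$ cancels $L$. Then invoke the $\Delta t$-independent (structural) singularity of the simple-diamond pencil at the complex parameter $2\lambda/\Delta t$; this is the same reading of the hypothesis the paper uses when it appeals to structural inconsistency. The only difference is how singularity is certified. The paper directly exhibits the null vector $x\otimes x\otimes v$, with $Fx=\lambda x$ and $(P-2\lambda K/\Delta t)v=0$. That is shorter than your Schur triangularization and likewise needs no diagonalizability. Your route, in exchange, gives the full factorization of $\det Q$.
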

\begin{proof}
    We use the same setting and notation as in Section~\ref{subsec:high-order}.
    As shown there, the high-order diamond scheme reduces to the linear system~\eqref{eq:high-order:system}.
    We show that the matrix $Q$ is singular under the assumption.

    If the simple diamond scheme becomes structurally inconsistent, for any complex value $c$, $K - CP$ is singular, since the corresponding bipartite graph $G_\mathrm{bi}$ is constructed considering non-zero patterns of both $K$ and $P$.
    Therefore, for any $c$ there exists a non-zero vector $v_c$ such that $(K - cP)v_c = 0$.

    Now, let $\lambda \neq 0$ be an eigenvalue of $F$, and let $x$ be the corresponding eigenvector. Set $c = 2\lambda / \Delta t$. As above, we can choose $v_c$ such that $(K - cP) v_c = 0$. Let $z = x \otimes x \otimes v_c \in \mathbb{R}^{r^2 d}$. For this $z$,
    \begin{equation*}
        \begin{split}
            Qz &= (I_{r^2} \otimes P)(x \otimes x \otimes v_c) - (I_r \otimes F \otimes \widetilde{K}) (x \otimes x \otimes v_c) - (F \otimes I_r \otimes \widetilde{L}) (x \otimes x \otimes v_c) \\
            &= x \otimes x \otimes (Pv_c) - x \otimes (Fx) \otimes (\widetilde{K} v_c) - (Fx) \otimes x \otimes (\widetilde{L} v_c) \\
            &= (x \otimes x) \otimes (Pv_c - \lambda (\widetilde{K} + \widetilde{L}) v_c) \\
            &= (x \otimes x) \otimes \left(P - \frac{2\lambda}{\Delta t}K\right) v_c = 0.
        \end{split}
    \end{equation*}
    Therefore, $Q$ has a non-trivial kernel and is also singular.
\end{proof}

\section{On enforcing consistency conditions in the overdetermined case} \label{app:enforce}
This subsection discusses the possibility of enforcing additional consistency conditions to resolve the overdetermined case for the example of the advection equation.

Consider the advection equation in the multi-symplectic form~\eqref{eq:AE1}--\eqref{eq:AE3}, for which the simple diamond scheme becomes overdetermined. The update equations on each diamond are given by
\begin{align*}
    &\frac{u^\mathrm{t} - u^\mathrm{b}}{\Delta t} + \frac{w_\mathrm{r} - w_\mathrm{l}}{\Delta x} = 0, \\
    &-\frac{\phi_\mathrm{r} - \phi_\mathrm{l}}{\Delta x} = -\frac{u^\mathrm{t} + u^\mathrm{b} + u_\mathrm{r} + u_\mathrm{l}}{4}, \\
    &-\frac{\phi^\mathrm{t} - \phi^\mathrm{b}}{\Delta t} = 2\frac{u^\mathrm{t} + u^\mathrm{b} + u_\mathrm{r} + u_\mathrm{l}}{4} - \frac{w^\mathrm{t} + w^\mathrm{b} + w_\mathrm{r} + w_\mathrm{l}}{4}.
\end{align*}

To resolve the overdetermination, one may consider enforcing the following consistency condition on the given data at the bottom, left, and right vertices of each diamond:
\begin{equation}
\label{eq:consistency_condition_CE}
    -\frac{\Delta t}{\Delta x} (w_\mathrm{r} - w_\mathrm{l}) + u^\mathrm{b} = \frac{4}{\Delta x} (\phi_\mathrm{r} - \phi_\mathrm{l}) - (u^\mathrm{b} + u_\mathrm{r} + u_\mathrm{l}).
\end{equation}

Demanding the condition~\eqref{eq:consistency_condition_CE} makes all the diamonds in the same time level coupled, since those diamonds share left and right vertices. This makes all the diamonds at the time level coupled and the scheme fully implicit.

In what follows, we consider how the scheme becomes when we enforce the condition~\eqref{eq:consistency_condition_CE}.

Let $U^n, \Phi^n, W^n \in \mathbb{R}^N$ be the vectors of the numerical solutions at time level $n$. The consistency condition~\eqref{eq:consistency_condition_CE} for all diamonds at time step $1$ can be written in the following matrix form:
\[
-\Delta t D^+ W^1 + U^1 = 4 D^+ \Phi^1 + \frac{4}{\Delta x} \begin{pmatrix}
0 \\
0 \\
\vdots \\
\bar{u}
\end{pmatrix} - U^{1/2} - 2M^+ U^1
\]
where $D^+$ is the forward difference matrix, $M^+$ is the forward averaging matrix, and $\bar{u} = \int_0^L u(x, 0) \dd{x}$. Compining this with the second update equations on all diamonds at time step $1$, we obtain the following system of equations for $W^1, \Phi^1$ when $U^1$ is given by the first and third update equations:
\[
\begin{pmatrix}
\Delta t D^+ & 4 D^+ \\
\Delta t I & -4I
\end{pmatrix}
\begin{pmatrix}
W^1 \\
\Phi^1
\end{pmatrix}
=
\begin{pmatrix}
2U^{1/2} + 2M^+ U^1 - \frac{4}{\Delta x} \bm{s} \\
*
\end{pmatrix}.
\]
where $\bm{s} = (0, 0, \ldots, \bar{u})^\top$ and $*$ is some vector determined by $U^1$ and known data. Let $A$ be the coefficient matrix on the left-hand side. Because of the singularity of $D^+$, $A$ is also singular. The left-null vectors of $A$ is multiples of
\[
\bm{v} = [\bm{1}^\top, \bm{0}^\top].
\]
This means that the right-hand side must satisfy the following compatibility condition for the system to be solvable:
\[
\bm{v}^\top
\begin{pmatrix}
2U^{1/2} + 2M^+ U^1 - \frac{4}{\Delta x} \bm{s} \\
*
\end{pmatrix}
=2U^{1/2} + 2M^+ U^1 - \frac{4}{\Delta x} \bm{s} = 0.
\]

Considering the updating formula for $U^1$
\[
U^1 = U^0 - \Delta t D^+ W^{1/2},
\]
we obtain the following compatibility condition for the initial data:
\[
2\bm{1}^\top U^{1/2} + 2\bm{1}^\top (U^0 - \Delta t D^+ W^{1/2}) = \frac{4}{\Delta x} \bar{u}.
\]
Since $\bm{1}^\top D^+ = 0$, this condition reduces to
\[
\bm{1}^\top U^{1/2} + \bm{1}^\top U^0 = \frac{2}{\Delta x} \bar{u}.
\]

Now, since $\ker A$ is spanned by
\[
\begin{pmatrix}
    4\bm{1} \\
    \Delta t \bm{1}
\end{pmatrix},
\]
the solution $(W^1, \Phi^1)$ can be written with the Moore--Penrose pseudoinverse $A^+$ of $A$ as
\[
\begin{pmatrix}
W^1 \\
\Phi^1
\end{pmatrix}
=
A^+
\begin{pmatrix}
2U^{1/2} + 2M^+ U^1 - \frac{4}{\Delta x} \bm{s} \\
*
\end{pmatrix}
+ \lambda
\begin{pmatrix}
4\bm{1} \\
\Delta t \bm{1}
\end{pmatrix},
\]
where $\lambda \in \mathbb{R}$ is arbitrary. Since the value of $\lambda$ does not affect the update of $U^2$, it can be choosen arbitrarily. As a result, by enforcing the consistency condition~\eqref{eq:consistency_condition_CE}, the scheme becomes fully implicit and requires the initial data to satisfy a compatibility condition over the whole spatial domain.

\end{document}